\title{The $s$-chromatic Ramsey number for stars}
\author{Aijun Yi{\thanks{School of Mathematics and Statistics, Hainan University, Haikou 570228, P. R. China. Email: yiaijun2024@163.com}}\and  Zhidan Luo{\thanks{School of Mathematics and Statistics, Hainan University, Haikou 570228, P. R. China. Research supported by the National Natural Science Foundation of China (No.12401449), the Hainan Provincial Natural Science Foundation of China (No.125QN209) and the Research Foundation Project of Hainan University (No. KYQD(ZR)-23155)
. Email: luodan@hainanu.edu.cn}}}
\date{}
\newtheorem{thm}{Theorem}
\newtheorem{coro}[thm]{Corollary}
\newtheorem{fact}[thm]{Fact}
\newtheorem{lem}[thm]{Lemma}
\def\q{\hspace*{\fill}$\Box$\medskip}
\begin{document}

  \maketitle

  \begin{abstract}
    In 1977, Chung, Chung and Liu generalized the definition of the Ramsey number. They introduced the $s$-chromatic Ramsey number as follows. Let $1\leq s< t$ be integers and let $A_{1}, A_{2}, \dots, A_{c}$ be subsets with size $s$ of $[t]$, where $c= {t\choose s}$. For given graphs $G_{1}, G_{2}, \dots, G_{c}$, the {\it $s$-chromatic Ramsey number} $r^{s, t}(G_{1}, G_{2}, \dots, G_{c})$, is the minimum positive integer $N$ such that every $t$-coloring of $E(K_{N})$ yields a copy of $G_{i}$ whose edges are colored by colors in the color set $A_{i}$ for some $i\in [c]$. The {\it star-critical $s$-chromatic Ramsey number} $r_{*}^{s, t}(G_{1}, G_{2}, \dots, G_{c})$, is the minimum integer $\ell$ such that every $t$-coloring of the edges in $K_{N}- E(K_{1, N- 1- \ell})$ yields a copy of $G_{i}$ whose edges are colored by colors in the color set $A_{i}$ for some $i\in [c]$, where $N= r^{s, t}(G_{1}, G_{2}, \dots, G_{c})$. If $G_{1}= G_{2}= \dots= G_{c}= G$, then we simplify them to $r^{s, t}(G)$ (also called the {\it weakened Ramsey number}) and $r^{s, t}_{*}(G)$, respectively. In this paper, we determine all the values of $r^{s, t}(K_{1, m})$ and $r_{*}^{s, t}(K_{1, m})$, and part of the value of $r^{s, t}(K_{1, m_{1}}, K_{1, m_{2}}, \dots, K_{1, m_{c}})$.

    \noindent {\bf Keywords:} $s$-chromatic Ramsey number, star-critical $s$-chromatic Ramsey number, star
    
    \noindent {\bf 2020 Mathematics Subject Classification:} 05C55; 05D10.
  \end{abstract}

\section{Introduction}
  All graphs in this paper are simple graphs. For a graph $G$, let $V(G)$ and $E(G)$ be the vertex set and the edge set of $G$, respectively. The order of $G$, $v(G)$, is $|V(G)|$. For given positive integer $t$, let $G_{1}, G_{2}, \dots, G_{t}$ be graphs and let $[t]$ be the set $\{1, 2, \dots, t\}$. In 1930, Ramsey \cite{FR} introduced the {\it Ramsey number} $r(G_{1}, G_{2}, \dots, G_{t})$, which is the minimum integer $N$ such that every $t$-coloring of $E(K_{N})$ yields a monochromatic copy of $G_{i}$ in color $i$ for some $i\in [t]$. Moreover, if $G_{1}= G_{2}= \cdots= G_{t}= G$, then we simplify it to $r^{t}(G)$. Let $K_{1, m}$ be the star of order $m+ 1$. In 1972, Harary \cite{H2} determined the value of $r(K_{1, m_{1}}, K_{1, m_{2}})$, and Burr and Roberts extended it.
  \begin{thm}[Burr and Roberts \cite{BR}]\label{theo1.1}
     If $m_{1}, m_{2}, \dots, m_{t}$ are integers larger than one, exactly $k$ of which are even, then
     $$r(K_{1, m_{1}}, K_{1, m_{2}}, \dots, K_{1, m_{t}})= \begin{cases}
       \sum_{i= 1}^{t} m_{i}- t+ 1, & k\geq 2\text{ is even},\\
       \sum_{i= 1}^{t} m_{i}- t+ 2, & \text{otherwise}.
     \end{cases}$$
   \end{thm}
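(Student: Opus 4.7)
The plan is to prove matching upper and lower bounds via a local pigeonhole argument paired with a parity obstruction, and to supply explicit bad colorings via classical factorizations of $K_n$. For the upper bound $r(K_{1,m_1},\dots,K_{1,m_t}) \le \sum_i m_i - t + 2$, I would take any $t$-edge-coloring of $K_N$ with $N = \sum_i m_i - t + 2$, fix a vertex $v$, and observe that the color-degrees $d_1(v),\dots,d_t(v)$ sum to $N - 1 = \sum_i(m_i-1) + 1$; pigeonhole forces $d_i(v) \ge m_i$ for some $i$, giving a monochromatic $K_{1,m_i}$ in the required color.

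To sharpen the upper bound to $\sum_i m_i - t + 1$ when $k \ge 2$ is even, I would set $N = \sum_i m_i - t + 1$ and assume for contradiction that some $t$-coloring of $K_N$ avoids every $K_{1,m_i}$ in color $i$. Then $\sum_i d_i(v) = N - 1 = \sum_i(m_i - 1)$ together with each $d_i(v) \le m_i - 1$ forces equality throughout, so every color class $G_i$ is $(m_i - 1)$-regular. Handshake then demands $(m_i - 1)N$ be even for every $i$. Since $k \ge 2$ guarantees some $m_i$ is even, $N$ must be even; yet $N \equiv 1 + \sum_i(m_i - 1) \equiv 1 + k \equiv 1 \pmod{2}$ because $k$ is even, a contradiction.

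For the matching lower bounds I would construct bad colorings by decomposing a complete graph into regular color classes of prescribed degree. When $k$ is odd, $N = \sum_i m_i - t + 1$ is even, so I $1$-factorize $K_N$ into $N - 1$ perfect matchings and bundle $m_i - 1$ matchings into color $i$. When $k = 0$, $N$ is odd, so I would use Walecki's decomposition of $K_N$ into $(N-1)/2$ Hamilton cycles and assign $(m_i - 1)/2$ of them to color $i$. When $k \ge 2$ is even, $N' := \sum_i m_i - t$ is itself even, so I $1$-factorize $K_{N'}$ into $N' - 1$ matchings and assign $m_i - 1$ matchings to color $i$ for all $i$ except one distinguished color $i_0$ (chosen so that $m_{i_0}$ is even, which exists since $k \ge 2$), which receives only $m_{i_0} - 2$ matchings; the factors are then exactly exhausted and every color class has maximum degree strictly less than $m_i$.

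The only real obstacle is the parity bookkeeping: tracking which of $N$, $N'$, and the individual $m_i - 1$ are even or odd in each sub-case of $k$, so that the appropriate factorization theorem applies. Once the cases are sorted, nothing deep remains beyond the classical $1$-factorization of $K_n$ for even $n$ and Hamilton decomposition of $K_n$ for odd $n$.
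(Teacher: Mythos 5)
Your proposal is correct: the pigeonhole upper bound, the regularity-plus-handshake parity argument for the $k\ge 2$ even case, and the lower-bound colorings from $1$-factorizations (for even order) and Hamilton/$2$-factor decompositions (for odd order, $k=0$) together give exactly the Burr--Roberts theorem, and this is the same toolkit the paper itself uses for its generalizations (Theorems \ref{theo2.5}--\ref{theo2.7}, via Theorem \ref{theo2.1}); note the paper only cites Theorem \ref{theo1.1} without proof. One harmless remark: in your $k\ge 2$ construction the distinguished color $i_0$ need not have $m_{i_0}$ even --- shorting any one color by a single matching works, since all that matters is that the $N'-1$ matchings are exhausted with each color class $(m_i-1)$- or $(m_{i_0}-2)$-regular.
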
 
  \noindent For more results on the Ramsey number, we refer to the dynamic survey \cite{SR}.
	
  For a graph $G$ and a subgraph $H$ of $G$, let $G- H$ be the graph obtained from $G$ by removing a copy of $H$. In 2010, Hook \cite{H3} introduced the {\it star-critical Ramsey number} $r_{*}(G_{1}, G_{2}, \dots, G_{t})$, which is the minimum integer $\ell$ such that every $t$-coloring of the edges in $K_{N}- E(K_{1, N- 1- \ell})$ yields a monochromatic copy of $G_{i}$ in color $i$ for some $i\in [t]$, where $N= r(G_{1}, G_{2}, \dots, G_{t})$. Moreover, if $G_{1}= G_{2}= \cdots= G_{t}= G$, then we simplify it to $r_{*}^{t}(G)$. Recently, Budden and DeJonge, and the second author determined the star-critical Ramsey number for stars.
  \begin{thm}[Budden and DeJonge \cite{BD}, Luo \cite{L}]\label{theo1.2}
    If $m_{1}, m_{2}, \dots, m_{t}$ are integers larger than one, exactly $k$ of which are even, then
     $$r_{*}(K_{1, m_{1}}, K_{1, m_{2}}, \dots, K_{1, m_{t}})= \begin{cases}
       \sum_{i= 1}^{t} m_{i}- t+ 1- k/2, & k\geq 2\text{ is even},\\
       1, & \text{otherwise}.
     \end{cases}$$
  \end{thm}
  \noindent For more results on the star-critical Ramsey number, we refer to the survey \cite{J2} and the book \cite{M}.
	
  Let $1\leq s< t$ be integers and let $A_{1}, A_{2}, \dots, A_{c}$ be subsets with size $s$ of $[t]$, where $c= {t\choose s}$ is the binomial coefficient. In 1977, Chung, Chung and Liu \cite{CCL} generalized the definition of the Ramsey number. They introduced the {\it $s$-chromatic Ramsey number} $r^{s, t}(G_{1}, G_{2}, \dots, G_{c})$, which is the minimum positive integer $N$ such that every $t$-coloring of $E(K_{N})$ yields a copy of $G_{i}$ whose edges are colored by colors in the color set $A_{i}$ for some $i\in [c]$. Moreover, if $G_{1}= G_{2}= \cdots= G_{c}= G$, then we simplify it to $r^{s, t}(G)$ (also called the {\it weakened Ramsey number} by Harborth and M{\"o}ller \cite{HM}). Meanwhile, they determined the value of $r^{2, 3}(K_{1, m_{1}}, K_{1, m_{2}}, K_{1, m_{3}})$. In 2017, Khamseh and Omidi \cite{KO1} generalized it. They determined the value of $r^{t- 1, t}(K_{1, m_{1}}, K_{1, m_{2}}, \dots, K_{1, m_{t}})$ under some conditions. In 2022, Khamseh and Omidi also completely determined the value of $r^{t- 1, t}(K_{1, m})$.
  \begin{thm}[Khamseh and Omidi \cite{KO}]\label{theo1.3}
    Let $x= \lfloor(mt- 1)/(t- 1)\rfloor$ and $q= \lfloor x/t\rfloor$. Then 
    $$r^{t- 1, t}(K_{1, m})= \begin{cases}
      x, & \text{$x= tq+ 1$ and $x, q$ are odd},\\
      x+ 1, & \text{otherwise}.
    \end{cases}$$
  \end{thm}
  \noindent Meanwhile, they determined the value of $r^{t- 2, t}(K_{1, m})$.
  
  We can also generalize the star-critical Ramsey number to the star-critical $s$-chromatic Ramsey number. The {\it star-critical $s$-chromatic Ramsey number}, $r_{*}^{s, t}(G_{1}, G_{2}, \dots, G_{c})$, is the minimum integer $\ell$ such that every $t$-coloring of the edges in $K_{N}- E(K_{1, N- 1- \ell})$ yields a copy of $G_{i}$ whose edges are colored by colors in the color set $A_{i}$ for some $i\in [c]$, where $N= r^{s, t}(G_{1}, G_{2}, \dots, G_{c})$. Recently, Budden, Moun and Jakhar determined the value of $r_{*}^{t- 1, t}(K_{1, m})$.
  \begin{thm}[Budden, Moun and Jakhar \cite{BMJ}]\label{theo1.4}
    Let $x= \lfloor (mt- 1)/(t- 1)\rfloor$ and $q= \lfloor x/t\rfloor$. Then 
    $$r_{*}^{t- 1, t}(K_{1, m})= \begin{cases}
      x- 1, & \text{$x= tq+ 1$ and $x, q$ are odd},\\
      1, & \text{otherwise}.
    \end{cases}$$
  \end{thm}
  \noindent Meanwhile, they determined the value of $r^{t- 2, t}_{*}(K_{1, m})$. Moreover, they also determined the value of $r^{2, t}(K_{1, 3})$ and $r_{*}^{2, t}(K_{1, 3})$ for $t\geq 3$.
  
\section{Results and Notations}
  Unfortunately, there is a calculation mistake in Corollary 2.4 in \cite{KO} when $t$ is odd, $s= t- 1$ and $m$ is large. In this paper, under some conditions, we determine the value of $r^{s, t}(K_{1, m_{1}}, K_{1, m_{2}}, \dots, K_{1, m_{c}})$. As corollaries, we completely determine the value of $r^{t- 1, t}(K_{1, m_{1}}, K_{1, m_{2}}, \dots, K_{1, m_{t}})$ and $r^{s, t}(K_{1, m})$ for all $1\leq s< t$. 
  
  For given integers $1\leq s< t$, let $c= {t\choose s}$ and let $A_{1}, A_{2}, \dots, A_{c}$ be subsets with size $s$ of $[t]$. For a set $A$ with size at least $s$, let ${A\choose s}$ be the collection of all subsets with size $s$ of $A$. For each $i\in [c]$, let $m_{A_{i}}$ be a positive integer, and let $K_{1, m_{A_{i}}}$ be the star whose edges are colored by colors in the color set $A_{i}$. In this paper, we assume that all elements in color sets are modules in $[t]$, that is, $b\equiv t$ if $b= at$ for some integer $a$ and $b\equiv a'$ if $b= at+ a'$, where $0< a'< t$. For a given positive integer $a$ and sets $B_{1}, B_{2}, \dots, B_{a}$, let $B_{1}\cup B_{2}\cup \cdots \cup B_{a}$ be the union of $B_{1}, B_{2}, \dots, B_{a}$ with repeat. Moreover, if $B_{1}= B_{2}= \cdots= B_{a}= B$, then we simplify it to $aB$. For example, if $B_{1}= \{1, 2, 3\}$ and $B_{2}= \{1, 2, 3\}$, then $B_{1}\cup B_{2}= \{1, 1, 2, 2, 3, 3\}= 2\{1, 2, 3\}$. For each $i\in [t]$, let
  $$\begin{aligned}
    \ell_{i}= & \left\lfloor \frac{1}{s}\left(\sum_{B\in {[s+ i]\setminus [i] \choose s- 1}} m_{\{i\}\cup B} - (s-1)m_{[s+i]\setminus[i]}- 1\right)\right\rfloor\\
    = & \frac{1}{s}\left(\sum_{B\in {[s+ i]\setminus [i] \choose s- 1}} m_{\{i\}\cup B} - (s-1)m_{[s+i]\setminus[i]}- a_{i}\right).
  \end{aligned}$$
  Note that $a_{i}\in [s]$. In fact, we will prove that $a_{i}= a$ for every $i\in [t]$. Now, we can show our main result.
  \begin{thm}\label{theo1.5}
    For given positive integers $1\leq s< t$, let $c= {t\choose s}$. Suppose that $\sum_{i= 1}^{b} m_{A_{p_{i}}}= \sum_{i= 1}^{b} m_{A_{q_{i}}}$ if $\bigcup_{i= 1}^{b} A_{p_{i}}= \bigcup_{i= 1}^{b} A_{q_{i}}$. Let $k$ be the number of odd numbers of $\{\ell_{1}, \ell_{2}, \dots, \ell_{t}\}$. Then
    $$r^{s, t}\left(K_{1, m_{A_{1}}}, K_{1, m_{A_{2}}}, \dots, K_{1, m_{A_{c}}}\right)= 
	\begin{cases}
		\sum_{i= 1}^{t} \ell_{i}+ a, & \text{$a= 1$ and $k\geq 1$ is even},\\
		\sum_{i= 1}^{t} \ell_{i}+ a+ 1, & \text{otherwise}.
	\end{cases}$$
  \end{thm}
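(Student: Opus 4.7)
We establish the formula by matching an upper and a lower bound on $r^{s,t}(K_{1,m_{A_1}},\dots,K_{1,m_{A_c}})$, after first verifying that $a_i=a$ is independent of $i$. The defining identity
\[
s\ell_i+a_i=\sum_{B\in\binom{[s+i]\setminus[i]}{s-1}} m_{\{i\}\cup B}-(s-1)\,m_{[s+i]\setminus[i]}
\]
allows us to write $(s\ell_i+a_i)-(s\ell_{i+1}+a_{i+1})$ as an integer linear combination of the $m_A$'s; the multiset-union hypothesis of the theorem allows this combination to be rewritten as a multiple of $s$, and since $a_i,a_{i+1}\in[s]$ we conclude $a_i=a_{i+1}$, giving $a_i\equiv a$.

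For the upper bound, assume a $t$-coloring of $E(K_N)$ admits no $K_{1,m_{A_j}}$ in the color set $A_j$. At every vertex $v$, the color-degrees $(d_i(v))_{i\in[t]}$ sum to $N-1$ and satisfy $\sum_{i\in A}d_i(v)\le m_A-1$ for every size-$s$ subset $A\subseteq[t]$. Fixing $i$ and summing the $s$ constraints for $A=\{i\}\cup B$ over $B\in\binom{[s+i]\setminus[i]}{s-1}$ yields
\[
s\,d_i(v)+(s-1)\!\!\sum_{j\in[s+i]\setminus[i]}\!\!d_j(v)\le s\ell_i+a-s+(s-1)\,m_{[s+i]\setminus[i]}.
\]
Summing these over $i$ and applying the identity $s\sum_i\ell_i+ta=M-(s-1)\sum_i m_{[s+i]\setminus[i]}$ with $M=\sum_i\sum_B m_{\{i\}\cup B}$ collapses the $m_{[s+i]\setminus[i]}$-terms, giving $s^2(N-1)\le M-ts$; together with the integrality of the $d_i(v)$ this yields $r\le\sum_i\ell_i+a+1$, which handles the ``otherwise'' branch. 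For the sharper branch $a=1$, $k\ge1$ even with $N=\sum_i\ell_i+1$, we invoke parity: $\sum_i\ell_i$ is even so $N$ is odd, and a tight solution with $d_i(v)=\ell_i$ at every vertex would require $\sum_v d_i(v)=N\ell_i$ to be even, impossible for the $k$ odd values of $\ell_i$. Propagating this obstruction through the inequalities above violates one of the $A$-constraints, yielding $r\le\sum_i\ell_i+1$.

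For the lower bound, we construct an explicit $t$-coloring of $K_{N-1}$ avoiding all forbidden colored stars. In the $a=1$, $k$ even case, $N-1=\sum_i\ell_i$ is even, and one decomposes $E(K_{N-1})$ into $t$ color classes with color $i$ a spanning $\ell_i$-regular subgraph; a cyclic construction on $V(K_{N-1})=\mathbb{Z}_{N-1}$ coloring edge $\{u,v\}$ by a function of $u-v\pmod{N-1}$, with residue classes grouped into $t$ blocks of appropriate sizes, realizes this. In the otherwise case, $N-1=\sum_i\ell_i+a$ and we produce a near-regular coloring with color-$i$ degree in $\{\ell_i,\ell_i+1\}$, distributing the $a$ units of slack so the parities cohere. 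In both cases we verify $\sum_{i\in A}d_i(v)\le m_A-1$ for every size-$s$ subset $A$, using the multiset-union hypothesis to transfer the inequality from the defining subsets $\{i\}\cup B$ to all others.

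The main obstacle will be the lower bound. The inequality $\sum_{i\in A}\ell_i\le m_A-1$ is not built into the definition of $\ell_i$ for an arbitrary $A$---the definition only references the subsets $\{i\}\cup B$ with $B\subseteq\{i+1,\dots,i+s\}$---so its verification for every size-$s$ subset requires the multiset-union hypothesis together with a careful combinatorial argument. Moreover, realizing the prescribed (possibly near-regular) degree sequence as an actual edge-coloring, and absorbing the slack $a$ in the otherwise branch without violating any $A$-constraint, is the most delicate technical step of the proof.
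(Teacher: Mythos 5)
Your upper-bound argument for the ``otherwise'' branch has a genuine gap when $a\geq 2$. Summing the window constraints $\sum_{j\in\{i\}\cup B}d_{j}(v)\leq m_{\{i\}\cup B}-1$ over $B$ and then over $i\in[t]$ gives exactly $s^{2}(N-1)\leq s^{2}\sum_{i}\ell_{i}+st(a-1)$, i.e. $N-1\leq \sum_{i}\ell_{i}+t(a-1)/s$, and averaging over \emph{all} $s$-subsets gives the same bound. For $a\geq 2$ this only forces a forbidden star once $N-1>\sum_{i}\ell_{i}+t(a-1)/s$, which is weaker than the required $N-1>\sum_{i}\ell_{i}+a$ whenever $t\geq as/(a-1)$ (e.g. $s=2$, $a=2$, $t\geq 4$ already fails); integrality of the $d_{i}(v)$ does not close this gap, since $t(a-1)/s$ can be an integer far exceeding $a$. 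So ``this yields $r\leq\sum_{i}\ell_{i}+a+1$'' is not justified. The paper avoids this by proving the upper bound by induction on the number of colors $t$ (Theorem \ref{theo2.7}): the window computation is carried out only in the base case $t=s+1$, where it is tight; for $t\geq s+2$ one uses the pigeonhole principle to find a color $i_{0}$ with $d_{i_{0}}(u)\leq\ell_{i_{0}}$ and passes to the remaining $t-1$ colors. Your sharper branch ($a=1$, $k\geq1$ even) is in the spirit of the paper's Theorem \ref{theo2.6} (force $d_{i}(v)=\ell_{i}$ for all $i$ and $v$, then contradict the Handshaking Lemma using an odd $\ell_{i'}$ and odd $N$), but the step you call ``propagating this obstruction'' is precisely the nontrivial part and is only gestured at.

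Two further remarks. First, the inequality you flag as the main obstacle in the lower bound, $\sum_{i\in A}\ell_{i}\leq m_{A}-1$ for every $A\in\binom{[t]}{s}$, is in fact the identity $\sum_{i\in A}\ell_{i}=m_{A}-a$, which follows directly from the multiset-union hypothesis (the paper's Fact \ref{fact2.2} and Corollary \ref{coro2.4}); so that is not where the difficulty lies. Second, your construction for the case $a=1$, $k$ even is off by one: on $N-1=\sum_{i}\ell_{i}$ vertices each vertex of $K_{N-1}$ has degree $\sum_{i}\ell_{i}-1$, so the color classes cannot all be spanning $\ell_{i}$-regular graphs; the paper takes $G_{t}$ to be $(\ell_{t}-1)$-regular, and in the remaining cases handles the parity bookkeeping (including adding an extra vertex when $\sum_{i}\ell_{i}+a$ is odd) via the $1$-factor/$2$-factor decompositions of Theorem \ref{theo2.1}, which is what your ``distributing the slack so the parities cohere'' would have to spell out.
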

  
  Assume that $s= t- 1$. We can show that if $\bigcup_{i= 1}^{b} A_{p_{i}}= \bigcup_{i= 1}^{b} A_{q_{i}}$, then $\sum_{i= 1}^{b} m_{A_{p_{i}}}= \sum_{i= 1}^{b} m_{A_{q_{i}}}$. Thus, the condition of Theorem \ref{theo1.5} holds.
  \begin{coro}
    For a given positive integer $t$, let $k$ be the number of odd numbers of $\{\ell_{1}, \ell_{2}, \dots, \ell_{t}\}$. Then
    $$r^{t- 1, t}\left(K_{1, m_{A_{1}}}, K_{1, m_{A_{2}}}, \dots, K_{1, m_{A_{t}}}\right)= 
	\begin{cases}
		\sum_{i= 1}^{t} \ell_{i}+ a, & \text{$a= 1$ and $k\geq 1$ is even},\\
		\sum_{i= 1}^{t} \ell_{i}+ a+ 1, & \text{otherwise}.
	\end{cases}$$
  \end{coro}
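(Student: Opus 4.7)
The plan is to deduce the corollary directly from Theorem~\ref{theo1.5} by verifying that the structural hypothesis imposed there is automatically satisfied when $s=t-1$. Since the excerpt has already reduced the task to this verification, no new Ramsey-theoretic argument is needed; the entire work lies in one short combinatorial observation about the $(t-1)$-subsets of $[t]$.

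First I would fix notation: for $s=t-1$ one has $c=\binom{t}{t-1}=t$, and the $s$-subsets of $[t]$ are exactly the complements $[t]\setminus\{j\}$ for $j\in[t]$. In particular each element $x\in[t]$ lies in precisely $t-1$ of the sets $A_1,\ldots,A_t$, missing only the single set $[t]\setminus\{x\}$. Next I would prove the key claim: whenever $\bigcup_{i=1}^{b} A_{p_i}=\bigcup_{i=1}^{b} A_{q_i}$ as multisets, the multisets $\{A_{p_1},\ldots,A_{p_b}\}$ and $\{A_{q_1},\ldots,A_{q_b}\}$ coincide. Indeed, if $x\in[t]$ has multiplicity $\mu_x$ in the multiset union of $b$ of the $A_i$'s, then exactly $b-\mu_x$ of those sets must be the unique set not containing $x$, namely $[t]\setminus\{x\}$. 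Running this over all $x\in[t]$ recovers the multiplicity of each $A_j$ in the selection, so the multiset of selected sets is uniquely determined by the union. In particular $\sum_{i=1}^{b} m_{A_{p_i}}=\sum_{j=1}^{b} m_{A_{q_j}}$, which is exactly the hypothesis of Theorem~\ref{theo1.5}.

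With the hypothesis verified, applying Theorem~\ref{theo1.5} in the case $s=t-1$ yields the stated formula and completes the proof. I do not expect a genuine obstacle here: the only care required is to check that the $\ell_i$'s and the parameter $a$ computed under $s=t-1$ match the quantities appearing in the corollary, which is a direct unpacking of definitions via the bijection $j\leftrightarrow[t]\setminus\{j\}$.
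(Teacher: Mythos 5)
Your proposal is correct and follows the same route as the paper: the paper derives this corollary from Theorem~\ref{theo1.5} by simply asserting that for $s=t-1$ the condition $\bigcup_{i=1}^{b}A_{p_{i}}=\bigcup_{i=1}^{b}A_{q_{i}}$ forces $\sum_{i=1}^{b}m_{A_{p_{i}}}=\sum_{j=1}^{b}m_{A_{q_{j}}}$. Your multiplicity argument (each $x\in[t]$ misses only the set $[t]\setminus\{x\}$, so the multiset of chosen sets is recoverable from the multiset union) is exactly the detail the paper leaves as ``easy to show,'' and it is sound.
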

  
  Note that the condition of Theorem \ref{theo1.5} holds if $K_{1, m_{A_{1}}}= K_{1, m_{A_{2}}}= \cdots= K_{1, m_{A_{c}}}= K_{1, m}$. Thus, the following holds.
  \begin{coro}\label{coro}
    Let $m, k$ and $t$ be positive integers. Let $s$ and $a'$ be integers such that $s\in [t- 1]$ and $0\leq a'\leq s- 1$. Then
    $$r^{s, t}(K_{1, m})= \begin{cases}
      (2k- 1)t+ s+ 1, &  m= 2ks \text{ and } s\neq 1,\\
      2kt+ a'+ 1, & m= 2ks+ a' \text{ and } 1\leq a'\leq s- 1,\\
      2kt+ s+ 1, & m= (2k+ 1)s,\\
      (2k+ 1)t+ 1, & m= (2k+ 1)s+ 1 \text{ and } t \text{ is even},\\
      (2k+ 1)t+ 2, & m= (2k+ 1)s+ 1 \text{ and } t \text{ is odd},\\
      (2k+ 1)t+ a'+ 1, & m= (2k+ 1)s+ a' \text{ and } 2\leq a'\leq s- 1.
    \end{cases}$$
  \end{coro}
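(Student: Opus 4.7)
The plan is to deduce this corollary directly from Theorem~\ref{theo1.5} by specializing to the symmetric case. The hypothesis of Theorem~\ref{theo1.5}, that $\sum_{i=1}^b m_{A_{p_i}} = \sum_{j=1}^b m_{A_{q_j}}$ whenever $\bigcup A_{p_i} = \bigcup A_{q_i}$, is trivially satisfied when all the $m_{A_i}$ equal a common value $m$. The only genuine work is evaluating the integers $\ell_i$ and the parity parameter $k$ (the count of odd $\ell_i$'s in Theorem~\ref{theo1.5}) and then matching cases. To avoid a name clash with the $k$ appearing in the corollary's parametrization $m = 2ks + a$ etc., I would rename the Theorem~\ref{theo1.5} count as, say, $K$, and reserve $k$ for the parametric use.

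First I would compute $\ell_i$ in the symmetric case. Since $[s+i]\setminus[i]$ has $s$ elements, the sum $\sum_{B\in\binom{[s+i]\setminus[i]}{s-1}} m_{\{i\}\cup B}$ reduces to $sm$. Hence
\[
\ell_i \;=\; \frac{1}{s}\bigl(sm - (s-1)m - a_i\bigr) \;=\; \frac{m - a_i}{s},
\]
and because $a_i\in[s]$ and $\ell_i\in\mathbb{Z}$, the value $a_i$ is uniquely determined as the representative of $m \pmod s$ lying in $[s]$. In particular $a_1 = \cdots = a_t = a$, confirming the statement in the paragraph preceding Theorem~\ref{theo1.5} that $a_i$ is constant in $i$, and all $\ell_i$ are equal to a common value $L := (m-a)/s$. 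Therefore $\sum_{i=1}^t \ell_i = tL$ and the count $K$ of odd $\ell_i$'s is either $0$ or $t$ according to the parity of $L$.

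Next I would run through the six cases of the corollary to verify each agrees with Theorem~\ref{theo1.5}. For $m=2ks$ (and $s\neq 1$): $a=s$, $L = 2k-1$ is odd, $K=t$; since $a\neq 1$ the ``otherwise'' branch gives $tL + a + 1 = (2k-1)t + s + 1$. For $m = 2ks + a$ with $1\le a\le s-1$: $L = 2k$ even, $K=0$, so ``otherwise'' gives $2kt + a + 1$. For $m = (2k+1)s$: $a = s$, $L = 2k$ even, $K=0$, giving $2kt + s + 1$. For $m = (2k+1)s + 1$ with $t$ even: $a = 1$, $L = 2k+1$ odd, $K = t$ is even and $\ge 1$, so the first branch of Theorem~\ref{theo1.5} gives $\sum \ell_i + 1 = (2k+1)t + 1$. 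For the same $m$ but $t$ odd: now $K = t$ is odd, the special branch fails, and ``otherwise'' yields $(2k+1)t + 1 + 1 = (2k+1)t + 2$. Finally, for $m = (2k+1)s + a$ with $2\le a\le s-1$: $L = 2k+1$, $K=t$, but $a\neq 1$ sends us to ``otherwise'', giving $(2k+1)t + a + 1$.

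There is no substantive obstacle here; the only place one could slip is the case split when the special branch of Theorem~\ref{theo1.5} activates, namely when simultaneously $a=1$ (equivalently $m\equiv 1\pmod s$), $L$ is odd (equivalently $m\equiv s+1\pmod{2s}$), and $t$ is even. The matching of these conditions with the fourth case of the corollary is the only subtle bookkeeping, so I would double-check that computation explicitly before concluding.
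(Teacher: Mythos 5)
Your proposal is correct and follows essentially the same route as the paper: the corollary is obtained by specializing Theorem \ref{theo1.5} to the symmetric case, where $\ell_i=\lfloor (m-1)/s\rfloor=(m-a)/s$ for all $i$, so the count of odd $\ell_i$'s is $0$ or $t$, and the six cases follow by the bookkeeping you describe (your case analysis, including the exclusion $s\neq 1$ in the first case and the activation of the special branch exactly when $a=1$, $L$ odd and $t$ even, matches the paper's statement). The paper leaves this arithmetic implicit, so your write-up simply supplies the details it omits.
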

  
  Furthermore, we completely determine the value of $r_{*}^{s, t}(K_{1, m})$ as follows.
  
  \begin{thm}\label{theo1.6}
    Let $m, k$ and $t$ be positive integers. Let $s$ be an integer such that $s\in [t- 1]$. Then
    $$r_{*}^{s, t}(K_{1, m})= \begin{cases}
        2kt+ t/2+ 1, & \text{$m= (2k+ 1)s+ 1$, $t$ is even and $1\leq s\leq t/2$},\\
        2kt+ s+ 1, & \text{$m= (2k+ 1)s+ 1$, $t$ is even and $t/2+ 1\leq s\leq t- 1$},\\
		1, & \text{otherwise}.	
	\end{cases}$$
  \end{thm}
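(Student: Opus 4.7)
My approach will be to analyse the structure of $t$-colorings of $K_{N-1}$ that avoid a valid star, where $N = r^{s,t}(K_{1,m})$ is computed from the corollary to Theorem~\ref{theo1.5}, and then see what adding one extra vertex $v$ forces. The key structural input is: at any vertex $u$ in such a coloring, the color-degrees $d_1(u),\ldots,d_t(u)$ have sum $N-2$ and their $s$ largest values sum to at most $m-1$. Running through the six forms of $m$ in the corollary, elementary calculations pin down the possible degree multisets at $u$. The crucial one is the non-trivial case $m = (2k+1)s+1$ with $t$ even (so $N-1 = (2k+1)t$), where, at least for $s \le t-2$, each vertex has exactly one \emph{short} color: there is a unique $\sigma(u) \in [t]$ with $d_{\sigma(u)}(u) = 2k$ and $d_c(u) = 2k+1$ for $c \ne \sigma(u)$. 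The case $s = t-1$ admits a few extra distributions but behaves identically in the end.

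For the trivial cases (all cases except $m=(2k+1)s+1$ with $t$ even) I would prove $r_{*}^{s,t}(K_{1,m}) = 1$ by showing that the single extra edge at $v$ already forces a valid star. Suppose a $t$-coloring of $K_N - K_{1,N-2}$ has no valid star and let $v$'s unique neighbor $u$ be connected via an edge of color $c$. The restriction to $V \setminus \{v\}$ is a no-valid-star coloring of $K_{N-1}$, so the structural lemma applies at $u$. For every remaining form of $m$ and every permitted distribution at $u$, the top-$s$ sum already equals $m-1$ and every color $c \in [t]$ satisfies $d_c(u) \ge d_{(s+1)}(u)$; adding the edge $uv$ therefore strictly raises the top-$s$ sum from $m-1$ to $m$, yielding a valid star at $u$ and a contradiction. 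The bound $r_{*}^{s,t}(K_{1,m}) \ge 1$ is automatic.

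For the non-trivial case set $k_0 = 2kt + \max(s,t/2) + 1$. Assume a no-valid-star coloring of $K_N - K_{1,N-1-k_0}$ exists. The structural lemma gives each $u \in V' = V \setminus \{v\}$ its short color $\sigma(u)$, and any edge $uv$ must be coloured $\sigma(u)$; otherwise $u$'s color-$c$ degree becomes $2k+2$ and the top-$s$ sum at $u$ reaches $m$. Hence $d_c(v) \le |\sigma^{-1}(c)|$ for every $c$. A parity observation — the sum $\sum_{u \in V'} d_c(u) = (2k+1)|V'| - |\sigma^{-1}(c)|$ is twice the number of color-$c$ edges, and $(2k+1)|V'| = (2k+1)^2 t$ is even since $t$ is even — forces every $|\sigma^{-1}(c)|$ to be even, and in particular to lie either in $[0,2k]$ or in $[2k+2,\infty)$. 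Letting $h$ denote the number of colors with $|\sigma^{-1}(c)| \ge 2k+2$, a short case split on whether $h \ge s$ or $h < s$, combined with the top-$s$ constraint at $v$ and the sum $\sum_c |\sigma^{-1}(c)| = (2k+1)t$, shows $|N(v)| \le 2kt + \max(s, t/2)$ — contradicting $|N(v)| = k_0$.

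For the matching lower bound I would exhibit a $t$-coloring of $K_N - K_{1, N - k_0}$ with no valid star. Choose a partition $V' = V_1 \sqcup \cdots \sqcup V_t$ with each $|V_c|$ even; for $s \le t/2$ take $t/2$ of the $|V_c|$ equal to $2k+2$ and $t/2$ equal to $2k$, while for $s > t/2$ use a modified pattern giving $h < s$ in the analysis above, saturating the top-$s$ bound. Set $\sigma(u) = c$ for $u \in V_c$ so each intended neighbor of $v$ will carry its $\sigma$-color. Then decompose $K_{V'}$ into $t$ edge-disjoint color classes where color $c$ realises the degree sequence "$2k$ on $V_c$, $2k+1$ on $V' \setminus V_c$", and finally attach each chosen neighbor $u$ of $v$ by an edge of color $\sigma(u)$. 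The main obstacle is producing this simultaneous edge-disjoint decomposition of $K_{(2k+1)t}$ with the prescribed near-regular degree sequences — the parity condition is necessary but not obviously sufficient; I expect to construct the decomposition by viewing $V' \cong \mathbb{Z}_{(2k+1)t}$ and defining each color class via a suitable set of cyclic differences, with a small number of local modifications at the vertices of $V_c$ to create the short colors, after which verifying "no valid star at $v$" reduces to the calculation already used in the upper bound.
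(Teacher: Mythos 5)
Your upper-bound analysis for the main case ($m=(2k+1)s+1$, $t$ even, $2\le s\le t-2$) is sound in outline: restricting to $V'$ forces at every vertex color degrees $2k+1$ in all colors except one short color of degree $2k$, every edge at $v$ must carry the short color of its endpoint, and the handshake count makes each $|\sigma^{-1}(c)|$ even. The target bound $|N(v)|\le 2kt+\max(s,t/2)$ does follow from exactly the constraints you list, but not quite via the split ``$h\ge s$ versus $h<s$'' as stated: for $s<t/2$ and $s\le h<t/2$ the natural estimate coming from that split, $\sum_c d_c(v)\le (2k+1)t-h$, still exceeds $2kt+t/2$. The repair is to split instead on whether the $s$-th largest degree at $v$ is at most $2k$ or equal to $2k+1$, and in the latter case to balance the two estimates $2kt+s+j$ and $(2k+1)t-s-j$, where $j$ is the number of colors outside the top $s$ with $d_c(v)=2k+1$; their minimum is at most $2kt+t/2$. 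This is a fixable imprecision, not a fatal flaw, and your treatment of the ``otherwise'' cases is fine (it is the same pigeonhole as the paper's Lemma \ref{lemma4.2}, applied one vertex over). For comparison, the paper's upper bound is much shorter: a parity argument produces a single vertex $w$ whose $s$-th largest color degree is at most $2k$, and then $d(w)\le (2k+1)s+2k(t-s)=2kt+s\le b-1$, a contradiction.

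The genuine gap is the matching lower bound, which is the heart of the theorem. You reduce it to decomposing $K_{(2k+1)t}$ into $t$ edge-disjoint spanning subgraphs, the $c$-th having degree $2k$ on a prescribed even-sized set $V_c$ and $2k+1$ elsewhere, and you explicitly say you ``expect'' to build this from cyclic difference classes with local modifications. That existence statement is precisely the content of the extremal construction, and as written nothing is proved: creating the degree deficiencies at prescribed vertex sets simultaneously for all $t$ colors while keeping the classes edge-disjoint is exactly the hard part, and parity alone does not deliver it. The paper sidesteps this entirely: since $t$ is even, $K_{(2k+1)t+1}$ has odd order and decomposes into $(2k+1)t/2$ Hamiltonian cycles (Theorem \ref{theo4.1}); deleting one edge at the special vertex $v$ from $t/2$ of these cycles and splitting each resulting Hamiltonian path into two matchings yields $t$ matchings, and color $i$ is taken to be $k$ untouched cycles together with the matching $M_i$; for $t/2+1\le s\le t-2$ one puts back $s-t/2$ of the deleted edges at $v$ in distinct colors. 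Every vertex then has top-$s$ color-degree sum at most $(2k+1)s=m-1$, and $v$ has degree $2kt+t/2$ (respectively $2kt+s$), which gives the required star-free coloring of $K_N-K_{1,N-b}$. You should either adopt a construction of this kind or actually prove your decomposition lemma; until then the lower bound, and hence the theorem, is not established by your argument.
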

	
  \textbf{Notations and definitions}:  A cycle $C$ of a graph $G$ is {\it Hamiltonian} if $V(C)= V(G)$. The {\it center} of a star $K_{1, m}$ is the vertex of degree $m$. A graph $G$ is {\it $r$-regular} if $d(v)= r$ for each $v\in V(G)$. A spanning subgraph $H$ of $G$ is {\it $r$-factor} if $H$ is $r$-regular. Let $G$ be a $t$-edge-colored graph. For each $i\in [t]$, let $G_{i}$ be the subgraph induced by all edges in color $i$. Moreover, let $d_{i}(v)$ be the degree of $v$ in $G_{i}$.
  
\section{Proof of Theorem \ref{theo1.5}}
  In this section, let $1\leq s< t$ be integers and let $c= {t\choose s}$. Let $A_{1}, A_{2}, \dots, A_{c}$ be subsets with size $s$ of $[t]$. Before we prove Theorem \ref{theo1.5}, we still need some preparation. Firstly, we need the following decomposition of a complete graphs (see Harary's book \cite{H}).
  \begin{thm}[Harary \cite{H}]\label{theo2.1}
    $K_{2n}$ can be decomposed into $(2n- 1)$ edge-disjoint $1$-factors, and $K_{2n+ 1}$ can be decomposed into $n$ edge-disjoint $2$-factors.
  \end{thm}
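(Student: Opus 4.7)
The plan is to exhibit explicit cyclic decompositions for each half of the statement and then verify the partition property by a short modular-arithmetic calculation. Both constructions exploit the fact that the indexing cyclic group has odd order, which is precisely what makes the relevant arithmetic maps bijective.

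For $K_{2n}$ I would label the vertex set $\{\infty\}\cup\mathbb{Z}_{2n-1}$ and, for each $i\in\mathbb{Z}_{2n-1}$, take the $i$-th $1$-factor $F_i$ to consist of the spoke $\{\infty,i\}$ together with the $n-1$ chords $\{i-k,i+k\}$ for $k=1,\dots,n-1$, with arithmetic modulo $2n-1$. Since $2n-1$ is odd, the elements $i,i\pm 1,\dots,i\pm(n-1)$ are all distinct, so each $F_i$ is a perfect matching of the $2n$ vertices. For edge-disjointness and full coverage: each spoke $\{\infty,j\}$ lies only in $F_j$, and each chord $\{u,v\}\subseteq\mathbb{Z}_{2n-1}$ lies in $F_i$ iff $u+v\equiv 2i\pmod{2n-1}$, which determines $i$ uniquely because $2$ is invertible modulo $2n-1$. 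Counting $(2n-1)\cdot n=\binom{2n}{2}$ confirms that the $F_i$ partition $E(K_{2n})$.

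For $K_{2n+1}$ I would use the cleaner ``difference'' decomposition: label the vertex set $\mathbb{Z}_{2n+1}$, and for each $k\in\{1,\dots,n\}$ let $H_k$ be the spanning subgraph with edge set $\{\{v,v+k\}:v\in\mathbb{Z}_{2n+1}\}$. Each vertex $v$ is incident in $H_k$ to exactly $v+k$ and $v-k$, which are distinct because $2n+1$ is odd and $1\leq k\leq n$, so $H_k$ is a $2$-factor. Every edge of $K_{2n+1}$ has a unique cyclic difference $d=\min(|u-v|,2n+1-|u-v|)\in\{1,\dots,n\}$ and therefore lies in exactly one $H_d$, and $n\cdot(2n+1)=\binom{2n+1}{2}$ accounts for all edges. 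The only real obstacle is bookkeeping: both partition claims reduce to the observation that the midpoint map in the first case and the cyclic-difference map in the second are bijections between the appropriate index sets and the edges of $K_N$, which works precisely because $2n-1$ and $2n+1$ are odd.
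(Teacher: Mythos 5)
Your proposal is correct. The paper does not actually prove this statement; it simply quotes it from Harary's book, so there is no internal argument to compare against. Your two constructions are the classical ones that underlie the cited result: the round-robin near-$1$-factorization of $K_{2n}$ on $\{\infty\}\cup\mathbb{Z}_{2n-1}$ (each chord $\{u,v\}$ assigned to the unique index $i$ with $u+v\equiv 2i \pmod{2n-1}$, which works because $2$ is invertible modulo the odd number $2n-1$), and the cyclic-difference decomposition of $K_{2n+1}$ into the $2$-regular spanning subgraphs $H_k$. Both verifications are complete: the matching and regularity checks, the uniqueness of the index determined by midpoint respectively difference, and the edge counts together give a genuine partition of the edge set. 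One small remark worth keeping in mind: your $H_k$ is in general a disjoint union of cycles of length $(2n+1)/\gcd(k,2n+1)$, hence a $2$-factor but not necessarily a Hamiltonian cycle; that is exactly what this statement needs, although the stronger Hamiltonian-cycle decomposition (used later in the paper as Theorem \ref{theo4.1}) would require the Walecki-type construction instead.
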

  
  \begin{fact}\label{fact2.2}
     Suppose that $\sum_{i= 1}^{b} m_{A_{p_{i}}}= \sum_{i= 1}^{b} m_{A_{q_{i}}}$ if $\bigcup_{i= 1}^{b} A_{p_{i}}= \bigcup_{i= 1}^{b} A_{q_{i}}$. Then for any set $A\in {[t]\choose s}$, we have 
    $$\sum_{i\in A} \left(\sum_{B\in {[s+ i]\setminus [i] \choose s- 1}} m_{\{i\}\cup B} - (s-1)m_{[s+i]\setminus[i]}\right) = sm_{A}.$$
  \end{fact}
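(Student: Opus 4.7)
The plan is to reduce Fact~\ref{fact2.2} to a multiplicity-counting statement and then invoke the hypothesis of Theorem~\ref{theo1.5}. Writing $C_{i}:= [s+ i]\setminus [i]= \{i+ 1,\dots, i+ s\}\pmod t$ for brevity, the identity to be proved can be rearranged as
$$\sum_{i\in A}\sum_{B\in {C_{i}\choose s- 1}} m_{\{i\}\cup B}\;=\;(s- 1)\sum_{i\in A} m_{C_{i}}\;+\;s\,m_{A}.$$
Both sides are sums of exactly $s^{2}$ terms of the form $m_{S}$ with $S\in {[t]\choose s}$: the left-hand side contains $|A|\cdot {s\choose s- 1}= s^{2}$ summands, and the right-hand side contains $|A|(s- 1)+ s= s^{2}$. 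By the hypothesis of Theorem~\ref{theo1.5}, applied with $b= s^{2}$, it therefore suffices to show that the two multi-unions of index sets agree as multisets of elements of $[t]$.

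For this, I would fix $k\in [t]$ and compute the multiplicity of $k$ in each multi-union. On the left, for each $i\in A$ the $s$ summands $\{i\}\cup B$ pick up $k$ in one of two disjoint ways: if $k= i$, then all $s$ of the sets contain $k$; and if $k\in C_{i}$, then $k\in B$ in exactly ${s- 1\choose s- 2}= s- 1$ of them. On the right, $k$ is picked up $s$ times from the $s$ copies of $A$ (when $k\in A$) and $s- 1$ times from each $C_{i}$ with $k\in C_{i}$. In both cases the multiplicity equals
$$s\cdot \mathbf{1}[k\in A]\;+\;(s- 1)\cdot \bigl|\{i\in A: k\in C_{i}\}\bigr|,$$
so the multi-unions agree and the hypothesis of Theorem~\ref{theo1.5} finishes the argument.

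There is no substantive obstacle beyond bookkeeping. The one convention to be careful about is the cyclic reduction modulo $t$ from Section~2: since $s< t$, the set $C_{i}$ is always an honest $s$-subset of $[t]\setminus \{i\}$, even when $\{i+ 1,\dots, i+ s\}$ wraps around, so each $\{i\}\cup B$ appearing on the left is a genuine element of ${[t]\choose s}$ and the corresponding $m$-value is defined. Apart from that, the only property of the $m$-values used is the hypothesis of Theorem~\ref{theo1.5}, which says that $m_{\bullet}$ is constant on multi-unions of equal cardinality; the whole proof is an instance of this slogan.
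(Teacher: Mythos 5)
Your proposal is correct and follows essentially the same route as the paper: both reduce the identity to the statement that the multiset of index sets $\{i\}\cup B$ (over $i\in A$, $B\in{[s+i]\setminus[i]\choose s-1}$) coincides with $s$ copies of $A$ together with $s-1$ copies of each $[s+i]\setminus[i]$, and then invoke the hypothesis of Theorem~\ref{theo1.5} with $b=s^{2}$. The only difference is cosmetic bookkeeping: you verify the multiset identity by counting the multiplicity of each $k\in[t]$, while the paper computes the inner multi-union for each fixed $i$ directly and then cancels the $(s-1)m_{[s+i]\setminus[i]}$ terms.
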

  \begin{proof}
	Without loss of generality, suppose that $A=\{i_{1}, i_{2}, \dots, i_{s}\}$. Thus,
    $$\begin{aligned}
        & \sum_{i\in A} \left(\sum_{B\in {[s+ i]\setminus [i] \choose s- 1}} m_{\{i\}\cup B} - (s-1)m_{[s+i]\setminus[i]}\right)\\
        = & \sum_{i\in A} \left(\sum_{j= 1}^{s} m_{\{i, i+1,\dots ,i+s\}\setminus \{i+j\}}- (s-1)m_{\{i+1, i+2,\dots,i+s\}}\right)\\
        = & \sum_{\ell= 1}^{s} \left(\sum_{j= 1}^{s} m_{\{i_{\ell}, i_{\ell}+1,\dots ,i_{\ell}+s\}\setminus \{i_{\ell}+j\}}- (s-1)m_{\{i_{\ell}+1, i_{\ell}+2,\dots,i_{\ell}+s\}}\right)\\
        = & \sum_{\ell= 1}^{s} \sum_{j= 1}^{s} m_{\{i_{\ell}, i_{\ell}+1,\dots ,i_{\ell}+s\}\setminus \{i_{\ell}+j\}}- (s-1)\sum_{\ell= 1}^{s}m_{\{i_{\ell}+1, i_{\ell}+2,\dots,i_{\ell}+s\}}\\
        = & sm_{\{i_{1}, i_{2}, \dots ,i_{s}\}}+ (s- 1) \sum_{\ell= 1}^{s} m_{\{i_{\ell}+ 1, i_{\ell}+ 2, \dots ,i_{\ell}+ s\}} - (s- 1) \sum_{\ell= 1}^{s} m_{\{i_{\ell}+ 1, i_{\ell}+ 2, \dots ,i_{\ell}+ s\}}\\
        = & sm_{A}
    \end{aligned}$$
    where the second to the last equality is from our assumption since 
    $$\begin{aligned}
      & \bigcup_{\ell= 1}^{s}\left(\bigcup_{j= 1}^{s} \{i_{\ell}, i_{\ell}+ 1, \dots, i_{\ell}+ s\}\setminus \{i_{\ell}+j\}\right)\\
      = & s\{i_{1}, i_{2}, \dots, i_{s}\}\cup \left((s- 1)\bigcup_{\ell= 1}^{s} \{i_{\ell}+ 1, i_{\ell}+ 2, \dots, i_{\ell}+ s\}\right).
    \end{aligned}$$
    We finish the proof.\q
  \end{proof}

  \noindent Recall that for any $i\in [t]$,
  $$\begin{aligned}
    \ell_{i}= & \left\lfloor \frac{1}{s}\left(\sum_{B\in {[s+ i]\setminus [i] \choose s- 1}} m_{\{i\}\cup B} - (s-1)m_{[s+i]\setminus[i]}- 1\right)\right\rfloor\\
    = & \frac{1}{s}\left(\sum_{B\in {[s+ i]\setminus [i] \choose s- 1}} m_{\{i\}\cup B} - (s-1)m_{[s+i]\setminus[i]}- a_{i}\right).
  \end{aligned}$$
	
  \begin{fact}\label{fact2.3}
    Suppose that $\sum_{i= 1}^{b} m_{A_{p_{i}}}= \sum_{i= 1}^{b} m_{A_{q_{i}}}$ if $\bigcup_{i= 1}^{b} A_{p_{i}}= \bigcup_{i= 1}^{b} A_{q_{i}}$. Then $a_{i}= a_{j}= a$ for all $i, j\in [t]$. Moreover, $a\in [s]$.
  \end{fact}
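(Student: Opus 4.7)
The plan is to leverage Fact \ref{fact2.2} to derive a divisibility condition and then use an exchange argument on $s$-subsets of $[t]$. For each $i\in[t]$, set $T_{i}=\sum_{B\in {[s+i]\setminus[i] \choose s-1}} m_{\{i\}\cup B} - (s-1) m_{[s+i]\setminus[i]}$, so that the defining identity for $\ell_i$ reads $T_i=s\ell_i+a_i$ with $a_i\in[s]$. Thus $a_i$ is the unique element of $[s]=\{1,\dots,s\}$ congruent to $T_i$ modulo $s$, with the convention that $s$ represents the zero residue class.

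Next, for any $A\in{[t]\choose s}$, Fact \ref{fact2.2} gives $\sum_{i\in A} T_i = s\,m_A$, which is divisible by $s$, so $\sum_{i\in A} a_i\equiv 0 \pmod s$. Now pick any distinct $i,j\in[t]$. Since $s<t$, I can choose an $s$-subset $A$ with $i\in A$ and $j\notin A$, and put $A'=(A\setminus\{i\})\cup\{j\}\in{[t]\choose s}$. Applying the previous divisibility to both $A$ and $A'$ and subtracting cancels the common terms and leaves $a_i\equiv a_j\pmod s$. Because each residue class modulo $s$ has exactly one representative in $[s]$, this forces $a_i=a_j$.

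The whole argument is short, and I do not anticipate a genuine obstacle. The only point worth being careful about is the representative convention, since $a_i$ ranges over $\{1,\dots,s\}$ rather than the more standard $\{0,1,\dots,s-1\}$; this is handled directly by the decomposition $T_i=s\ell_i+a_i$. The existence of an $s$-subset $A$ containing $i$ but missing $j$ for any distinct $i,j\in[t]$ uses only the hypothesis $s<t$, which is available.
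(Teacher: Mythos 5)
Your proposal is correct and follows essentially the same route as the paper: both use Fact \ref{fact2.2} to deduce $\sum_{i\in A}a_{i}\equiv 0\pmod{s}$ for every $A\in{[t]\choose s}$, then compare two $s$-sets differing in a single element (your $A$ and $A'=(A\setminus\{i\})\cup\{j\}$ are the paper's $B\cup\{i_{1}\}$ and $B\cup\{i_{2}\}$) to get $a_{i}\equiv a_{j}\pmod{s}$ and hence $a_{i}=a_{j}$ since both lie in $[s]$.
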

  \begin{proof}
    For any set $A\in {[t]\choose s}$, we have $\sum_{i\in A} \ell_{i}= m_{A}- \sum_{i\in A}a_{i}/s$ by Fact \ref{fact2.2}. Note that $\sum_{i\in A}a_{i}/s$ is an integer since $\sum_{i\in A} \ell_{i}$ and $m_{A}$ are integers. Thus, $\sum_{i\in A}a_{i}\equiv 0\pmod s$ for any set $A\in {[t]\choose s}$. Let $i_{1}, i_{2}\in [t]$ be integers such that $i_{1}\neq i_{2}$. Let $B\in {[t]\choose s- 1}$ be a set such that $i_{1}\not\in B$ and $i_{2}\not\in B$ (we can do this since $1\leq s< t$). Note that $\sum_{i\in B} a_{i}+ a_{i_{1}}\equiv 0\pmod s$ and $\sum_{i\in B} a_{i}+ a_{i_{2}}\equiv 0\pmod s$ since $B\cup \{i_{1}\}\in {[t]\choose s}$ and $B\cup \{i_{2}\}\in {[t]\choose s}$. Consequently, $a_{i_{1}}- a_{i_{2}}\equiv 0\pmod s$. Furthermore, $a_{i_{1}}= a_{i_{2}}$ since $a_{i_{1}}, a_{i_{2}}\in [s]$. Thus, we are done by the arbitrariness of $i_{1}$ and $i_{2}$.\q
  \end{proof}

  \noindent The following is deduced from Fact \ref{fact2.2} and Fact \ref{fact2.3}.
  \begin{coro} \label{coro2.4}
    Suppose that $\sum_{i= 1}^{b} m_{A_{p_{i}}}= \sum_{j= i}^{b} m_{A_{q_{i}}}$ if $\bigcup_{i= 1}^{b} A_{p_{i}}= \bigcup_{i= 1}^{b} A_{q_{i}}$. For any set $A\in {[t]\choose s}$, we have $\sum_{i\in A} \ell_{i}= m_{A}- a$, where $a\in [s]$.
  \end{coro}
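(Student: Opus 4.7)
The plan is to combine the two preceding facts in essentially one computation. By the very definition of $\ell_i$ we have
$$s\ell_i \;=\; \sum_{B\in {[s+ i]\setminus [i] \choose s- 1}} m_{\{i\}\cup B} \;-\; (s-1)m_{[s+i]\setminus[i]} \;-\; a_i$$
for every $i\in[t]$. I would sum this identity over $i\in A$ for a fixed but arbitrary $A\in\binom{[t]}{s}$, obtaining
$$s\sum_{i\in A}\ell_i \;=\; \sum_{i\in A}\!\left(\sum_{B\in {[s+ i]\setminus [i] \choose s- 1}} m_{\{i\}\cup B} - (s-1)m_{[s+i]\setminus[i]}\right) \;-\; \sum_{i\in A} a_i.$$

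The first summand on the right is exactly the quantity computed in Fact \ref{fact2.2}, which (thanks to the hypothesis that $\sum m_{A_{p_i}}$ depends only on the multiset $\bigcup A_{p_i}$) equals $sm_A$. The second summand is handled by Fact \ref{fact2.3}: since $a_i$ takes the common value $a\in[s]$ for every $i\in[t]$, we have $\sum_{i\in A} a_i = sa$. Substituting both gives $s\sum_{i\in A}\ell_i = sm_A - sa$, and dividing by $s$ yields the claimed identity $\sum_{i\in A}\ell_i = m_A - a$, with $a\in[s]$ by Fact \ref{fact2.3}.

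There is no real obstacle here; the corollary is a direct algebraic consequence of Facts \ref{fact2.2} and \ref{fact2.3}, and the only thing to be careful about is to invoke the hypothesis on equal multiset-sums exactly once (through Fact \ref{fact2.2}) and to cite Fact \ref{fact2.3} to collapse $\sum_{i\in A}a_i$ into $sa$ so that the division by $s$ produces an integer equality.
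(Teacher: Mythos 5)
Your proof is correct and is exactly the deduction the paper intends: sum the defining identity $s\ell_i=\sum_{B} m_{\{i\}\cup B}-(s-1)m_{[s+i]\setminus[i]}-a_i$ over $i\in A$, apply Fact \ref{fact2.2} to the first part and Fact \ref{fact2.3} to replace each $a_i$ by $a$, then divide by $s$. No differences from the paper's (implicit) argument worth noting.
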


 Now, we are ready to prove our main result. We divide Theorem \ref{theo1.5} into three parts. Let $k$ be the number of odd numbers in $\{\ell_{1}, \ell_{2}, \dots ,\ell_{t}\}$.
	
  \begin{thm}\label{theo2.5} 
    Suppose that $\sum_{i= 1}^{b} m_{A_{p_{i}}}= \sum_{i= 1}^{b} m_{A_{q_{i}}}$ if $\bigcup_{i= 1}^{b} A_{p_{i}}= \bigcup_{i= 1}^{b} A_{q_{i}}$. Then
	$$r^{s, t}\left(K_{1, m_{A_{1}}}, K_{1, m_{A_{2}}}, \dots, K_{1, m_{A_{c}}}\right)\geq 
	\begin{cases}
		\sum_{i= 1}^{t} \ell_{i}+ a, & \text{$a= 1$ and $k\geq 1$ is even},\\
		\sum_{i= 1}^{t} \ell_{i}+ a+ 1, & \text{otherwise}.
	\end{cases}$$
  \end{thm}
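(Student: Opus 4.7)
\noindent\textit{Proof plan.} For the lower bound I construct, for each case, an explicit $t$-edge-coloring of $K_{N-1}$ (where $N$ is the claimed Ramsey value) in which, for every vertex $v$ and every $A\in\binom{[t]}{s}$,
$$\sum_{i\in A} d_i(v) \le m_A - 1,$$
so that no vertex can be the center of a copy of $K_{1,m_{A_i}}$ in colors of $A_i$. By Corollary~\ref{coro2.4}, $\sum_{i\in A}\ell_i = m_A - a$, so the baseline assignment $d_i(v)=\ell_i$ leaves a slack of $a-1$, and any redistribution with non-negative extras summing to $a-1$ preserves the constraint.

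The main tool is Theorem~\ref{theo2.1}: a complete graph of even order decomposes into one-factors that can be grouped freely into color classes, while a complete graph of odd order decomposes into two-factors and so forces each color class to be even-regular. I will split into three regimes. If $N-1 = \sum\ell_i + a$ is even, I one-factor-decompose $K_{N-1}$ and assign $\ell_i + e_i$ factors to color $i$ with $\sum e_i = a-1$ and $e_i\ge 0$; the constraint is automatic since $\sum_{i\in A}(\ell_i+e_i)\le \sum_{i\in A}\ell_i + (a-1) = m_A - 1$. If $N-1$ is odd with $a\ge 2$, I designate a vertex $v_0$, one-factor-decompose the remaining even-order $K_{N-2}$, assign $\ell_i + f_i$ factors to color $i$ with $\sum f_i = a-2$ and $f_i\ge 0$, and then color the star at $v_0$ so that $v_0$ has color-$i$ degree $\ell_i + g_i$ with $\sum g_i = a-1$ and $g_i\ge 0$; each neighbor $v$ of $v_0$ picks up at most one extra edge in colors of $A$, absorbed by the remaining slack. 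If $a=1$, a direct one-factor (when $\sum\ell_i + 1$ is even, i.e.\ $k$ odd) or two-factor (when $k=0$, so every $\ell_i$ is even) decomposition of $K_{N-1}$ suffices; the remaining sub-case $k\ge 2$ even is precisely the special case of the theorem, and there I instead use $N-1=\sum\ell_i$ (which is even since $k$ is even), one-factor-decompose it, and allocate $\ell_i$ factors to each color $i\neq j$ and $\ell_j-1$ factors to some color $j$.

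The main obstacle is the odd-order, $a\ge 2$ regime, where no regular decomposition of $K_{N-1}$ exists in general: the $v_0$-splitting trick converts the problem into one-factoring an even-order complete graph, with the star at $v_0$ accounting for the parity surplus and its slight cost absorbed by having $a-1\ge 1$ spare degree to distribute. The special case is singled out as the unique configuration in which $a=1$ leaves no slack to absorb the parity defect on odd-order $K_{N-1}$ with some $\ell_i$ odd, and the Ramsey value consequently drops to $\sum\ell_i + 1$.
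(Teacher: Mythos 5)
Your proposal is correct and follows essentially the same route as the paper: both use Corollary~\ref{coro2.4} together with the one-factor/two-factor decompositions of Theorem~\ref{theo2.1} to build near-regular colorings with $\sum_{i\in A}d_i(v)\le m_A-1$, split into the same parity/slack cases (even order, odd order with $a\ge 2$ via a distinguished vertex, $a=1$ with $k=0$ via $2$-factors, and the special case $a=1$, $k\ge 2$ even). The only cosmetic difference is that in the odd-order $a\ge2$ case you remove a vertex $v_0$ and one-factorize the rest, whereas the paper adds a vertex $u$ to a one-factorized even complete graph — the same construction described in the opposite direction.
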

  \begin{proof}
    We can split our discussion into the following cases since $a\in [s]$.
    
	Case 1. $a= 1$ and $k\geq 1$ is even.
    
    Note that $\sum_{i=1}^{t}\ell_{i}$ is even. Thus, $K_{\sum_{i=1}^{t}\ell_{i}}$ can be decomposed into $\sum_{i=1}^{t}\ell_{i}- 1$ edge-disjoint $1$-factors by Theorem \ref{theo2.1}. Furthermore, we have graphs $G_{1}, G_{2}, \dots, G_{t}$ that satisfy the following. (1) $G_{i}$ and $G_{j}$ are edge-disjoint for all $i\in [t], j\in [t]$ and $i\neq j$. (2) $G_{i}$ is $\ell_{i}$-regular for each $i\in [t-1]$ and $G_{t}$ is $(\ell_{t}- 1)$-regular. Color $G_{i}$ by color $i$ for each $i\in [t]$ and denote the resulting graph by $G$. Note that $G$ is a $t$-edge-colored $K_{\sum_{i=1}^{t}\ell_{i}}$. Let $v\in V(G)$ be a vertex and let $A\in {[t]\choose s}$ be a color set. Note that $\sum_{i\in A} d_{i}(v)\leq \sum_{i\in A} \ell_{i}= m_{A}- a= m_{A}- 1$ by Corollary \ref{coro2.4} since $a= 1$. Consequently, there is no copy of $K_{1, m_{A}}$ whose edges are colored by colors in the color set $A$ for each $A\in {[t]\choose s}$, and thus $r^{s, t}(K_{1, m_{A_{1}}}, K_{1, m_{A_{2}}}, \dots, K_{1, m_{A_{c}}})\geq \sum_{i= 1}^{t} \ell_{i}+ 1= \sum_{i= 1}^{t} \ell_{i}+ a$ since $a= 1$.

    Case 2. $a= 1$ and $k= 0$.
    
    Note that $\ell_{i}$ is even for each $i\in [t]$ and $\sum_{i=1}^{t}\ell_{i}+ a$ is odd. Thus, $K_{\sum_{i=1}^{t}\ell_{i}+ a}$ can be decomposed into $\sum_{i=1}^{t}\ell_{i}/2$ edge-disjoint $2$-factors by Theorem \ref{theo2.1} since $a= 1$. Furthermore, we have graphs $G_{1}, G_{2}, \dots, G_{t}$ that satisfy the following since $\ell_{i}$ is even for each $i\in [t]$. (1) $G_{i}$ and $G_{j}$ are edge-disjoint for all $i\in [t], j\in [t]$ and $i\neq j$. (2) $G_{i}$ is $\ell_{i}$-regular for each $i\in [t]$. Color $G_{i}$ by color $i$ for each $i\in [t]$ and denote the resulting graph by $G$. Note that $G$ is a $t$-edge-colored $K_{\sum_{i=1}^{t}\ell_{i}+ a}$. Let $v\in V(G)$ be a vertex and let $A\in {[t]\choose s}$ be a color set. Note that $\sum_{i\in A} d_{i}(v)= \sum_{i\in A} \ell_{i}= m_{A}- a= m_{A}- 1$ by Corollary \ref{coro2.4} since $a= 1$. Consequently, there is no copy of $K_{1, m_{A}}$ whose edges are colored by colors in the color set $A$ for each $A\in {[t]\choose s}$, and thus $r^{s, t}(K_{1, m_{A_{1}}}, K_{1, m_{A_{2}}}, \dots, K_{1, m_{A_{c}}})\geq \sum_{i= 1}^{t} \ell_{i}+ a+ 1$.

    Case 3. $a\geq 2$ and $\sum_{i=1}^{t}\ell_{i}+ a$ is odd.

    Note that $\sum_{i=1}^{t}\ell_{i}+ a- 1$ is even. Thus, $K_{\sum_{i=1}^{t}\ell_{i}+ a- 1}$ can be decomposed into $\sum_{i=1}^{t}\ell_{i}+ a- 2$ edge-disjoint $1$-factors by Theorem \ref{theo2.1}. Furthermore, we have graphs $G_{1}, G_{2}, \dots, G_{t}$ that satisfy the following. (1) $G_{i}$ and $G_{j}$ are edge-disjoint for all $i\in [t], j\in [t]$ and $i\neq j$. (2) $G_{i}$ is $\ell_{i}$-regular for each $i\in [t- 1]$ and $G_{t}$ is $(\ell_{t}+ a- 2)$-regular. Color $G_{i}$ by color $i$ for each $i\in [t]$ and denote the resulting graph by $G$. Let $u\not\in V(G)$ be a vertex and add all edges between $\{u\}$ and $V(G)$. Color $\ell_{i}$ edges adjacent to $u$ by color $i$ for each $i\in [t- 1]$ and $\ell_{t}+ a- 1$ edges adjacent to $u$ by color $t$. Denote the resulting graph by $H$. Note that $H$ is a $t$-edge-colored $K_{\sum_{i=1}^{t}\ell_{i}+ a}$. Let $v\in V(H)$ be a vertex and let $A\in {[t]\choose s}$ be a color set. Note that $\sum_{i\in A} d_{i}(v)\leq \sum_{i\in A} \ell_{i}+ a- 1= m_{A}- 1$ by Corollary \ref{coro2.4} since $a\geq 2$. Consequently, there is no copy of $K_{1, m_{A}}$ whose edges are colored by colors in the color set $A$ for each $A\in {[t]\choose s}$, and thus $r^{s, t}(K_{1, m_{A_{1}}}, K_{1, m_{A_{2}}}, \dots, K_{1, m_{A_{c}}})\geq \sum_{i= 1}^{t} \ell_{i}+ a+ 1$.

    Case 4. $\sum_{i=1}^{t}\ell_{i}+ a$ is even.

    Note that $K_{\sum_{i=1}^{t}\ell_{i}+ a}$ can be partitioned into $\sum_{i=1}^{t}\ell_{i}+ a- 1$ edge-disjoint $1$-factors by Theorem \ref{theo2.1}. Furthermore, we have graphs $G_{1}, G_{2}, \dots, G_{t}$ that satisfy the following. (1) $G_{i}$ and $G_{j}$ are edge-disjoint for all $i\in [t], j\in [t]$ and $i\neq j$. (2) $G_{i}$ is $\ell_{i}$-regular for each $i\in [t-1]$ and $G_{t}$ is $(\ell_{t}+ a- 1)$-regular. Color $G_{i}$ by color $i$ for each $i\in [t]$ and denote the resulting graph by $G$. Let $v\in V(G)$ be a vertex and let $A\in {[t]\choose s}$ be a color set. Note that $\sum_{i\in A} d_{i}(v)\leq \sum_{i\in A} \ell_{i}+ a- 1= m_{A}- 1$ by Corollary \ref{coro2.4} since $a\geq 1$. Consequently, there is no copy of $K_{1, m_{A}}$ whose edges are colored by colors in the color set $A$ for each $A\in {[t]\choose s}$, and thus $r^{s, t}(K_{1, m_{A_{1}}}, K_{1, m_{A_{2}}}, \dots, K_{1, m_{A_{c}}})\geq \sum_{i= 1}^{t} \ell_{i}+ a+ 1$.\q
  \end{proof}

  \begin{thm}\label{theo2.6} 
    Suppose that $\sum_{i= 1}^{b} m_{A_{p_{i}}}= \sum_{i= 1}^{b} m_{A_{q_{i}}}$ if $\bigcup_{i= 1}^{b} A_{p_{i}}= \bigcup_{i= 1}^{b} A_{q_{i}}$. If $a= 1$ and $k\geq 1$ is even, then $r^{s, t}(K_{1, m_{A_{1}}}, K_{1, m_{A_{2}}}, \dots, K_{1, m_{A_{c}}})= \sum_{i= 1}^{t} \ell_{i}+ a$.
  \end{thm}
  \begin{proof}
    We just need to show that $r^{s, t}(K_{1, m_{A_{1}}}, K_{1, m_{A_{2}}}, \dots, K_{1, m_{A_{c}}})\leq \sum_{i= 1}^{t} \ell_{i}+ a= N$ by Theorem \ref{theo2.5}. Color $K_{N}$ arbitrarily by $t$ colors and denote the resulting graph by $G$. Let $u\in V(G)$ be a vertex. Moreover, let $d_{i}= d_{i}(u)$. Note that $\sum_{i= 1}^{t} d_{i}= d(u)= N- 1= \sum_{i= 1}^{t} \ell_{i}$ since $a= 1$. By the pigeonhole principle, there is $i_{0}\in [t]$ such that $d_{i_{0}}\geq \ell_{i_{0}}$. Without loss of generality, we may assume that $d_{1}= \ell_{1}+ c$, where $c\geq 0$. Moreover, we may assume that $\sum_{B\in {[t]\backslash \{1\}\choose s- 1}}\sum_{i\in \{1\}\cup B} d_{i}\leq \sum_{B\in {[t]\backslash \{1\}\choose s- 1}} (m_{\{1\}\cup B}- 1)$. Otherwise, there is a copy of $K_{1, m_{\{1\}\cup B}}$ whose edges are colored by colors in the color set $\{1\}\cup B$ for some $B\in {[t]\backslash \{1\}\choose s- 1}$, and we are done. Consequently,
    $$\begin{aligned}
        & \sum_{B\in {[t]\backslash \{1\}\choose s- 1}} (m_{\{1\}\cup B}- 1)\geq \sum_{B\in {[t]\backslash \{1\}\choose s- 1}}\sum_{i\in \{1\}\cup B} d_{i}\\
        = & {t- 1\choose s- 1}d_{1}+ {t- 2\choose s- 2}\sum_{i= 2}^{t} d_{i}= {t- 2\choose s- 1}d_{1}+ {t- 2\choose s- 2}d(u)\\
        = & {t- 2\choose s- 1}\ell_{1}+ {t- 2\choose s- 1}c+ {t- 2\choose s- 2}\sum_{i= 1}^{t} \ell_{i}\\
        = & \sum_{B\in {[t]\backslash \{1\}\choose s- 1}} (m_{\{1\}\cup B}- 1)+ {t- 2\choose s- 1}c,
    \end{aligned}$$
    where the last equality is from Corollary \ref{coro2.4} since $a= 1$. Thus, $c= 0$ and $d_{1}= \ell_{1}$. Consequently, there is $j_{0}\in [t]\backslash \{1\}$ such that $d_{j_{0}}\geq \ell_{i_{0}}$ since $\sum_{i= 2}^{t} d_{i}= \sum_{i= 2}^{t} \ell_{i}$. The same discussion as before, we have $d_{i_{0}}= \ell_{i_{0}}$. Repeat the process, either we have a copy of $K_{1, m_{A}}$ whose edges are colored by colors in the color set $A$ for some $A\in {[t]\choose s}$ or $d_{i}= \ell_{i}$ for each $i\in [t]$. We may assume the latter. Note that the subgraph induced by all edges in color $i$ is $\ell_{i}$-regular for each $i\in [t]$ since $u\in V(G)$ is arbitrarily. Recall that $a= 1$ and $k\geq 1$ is even. Thus, there is an $i'\in [t]$ such that $\ell_{i'}$ is odd. Moreover, $\sum_{i= 1}^{t} \ell_{i}+ a$ is odd. It is impossible since the odd regular graph on odd vertices does not exist by the Handshaking Lemma.\q
  \end{proof}

  \begin{thm}\label{theo2.7} 
    Suppose that $\sum_{i= 1}^{b} m_{A_{p_{i}}}= \sum_{i= 1}^{b} m_{A_{q_{i}}}$ if $\bigcup_{i= 1}^{b} A_{p_{i}}= \bigcup_{i= 1}^{b} A_{q_{i}}$. If $2\leq a\leq s$, or $k= 0$, or $k$ is odd, then $r^{s, t}(K_{1, m_{A_{1}}}, K_{1, m_{A_{2}}}, \dots, K_{1, m_{A_{c}}})= \sum_{i= 1}^{t} \ell_{i}+ a+ 1$.
  \end{thm}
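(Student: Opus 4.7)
The plan is to establish the matching upper bound
$$r^{s, t}(K_{1, m_{A_{1}}}, \dots, K_{1, m_{A_{c}}})\leq \sum_{i= 1}^{t}\ell_{i}+ a+ 1,$$
since the lower bound is already supplied by Theorem \ref{theo2.5}. In contrast to the proof of Theorem \ref{theo2.6}, I expect that no parity or regularity argument will be required; the single extra vertex of slack should make a one-vertex pigeonhole go through directly.

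Let $N= \sum_{i= 1}^{t}\ell_{i}+ a+ 1$ and fix an arbitrary $t$-coloring of $K_{N}$. For any vertex $u$, I introduce the excesses $\delta_{i}\defeq d_{i}(u)- \ell_{i}$, and observe that
$$\sum_{i= 1}^{t}\delta_{i}= d(u)- \sum_{i= 1}^{t}\ell_{i}= (N- 1)- \sum_{i= 1}^{t}\ell_{i}= a.$$
The entire argument reduces to exhibiting some $A\in {[t]\choose s}$ with $\sum_{i\in A}\delta_{i}\geq a$, because Corollary \ref{coro2.4} then gives $\sum_{i\in A}d_{i}(u)\geq \sum_{i\in A}\ell_{i}+ a= m_{A}$, yielding a copy of $K_{1, m_{A}}$ at $u$ whose edges all lie in colors $A$.

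The core step is thus an arithmetic claim: whenever $\delta_{1}, \dots, \delta_{t}$ are integers with $\sum_{i}\delta_{i}= a$ and $1\leq a\leq s< t$, some $s$ of them sum to at least $a$. I will take $A$ to be a set of $s$ indices achieving the largest values of $\delta_{i}$ (ties broken arbitrarily) and split on the sign of $\delta^{*}\defeq \min_{i\in A}\delta_{i}$. If $\delta^{*}\leq 0$, then $\delta_{j}\leq \delta^{*}\leq 0$ for every $j\notin A$, so $\sum_{i\in A}\delta_{i}= a- \sum_{j\notin A}\delta_{j}\geq a$. If $\delta^{*}\geq 1$, then each of the $s$ values in $A$ is at least $1$, giving $\sum_{i\in A}\delta_{i}\geq s\geq a$, where the last inequality uses $a\leq s$; this holds since $a\in [s]$ by Fact \ref{fact2.3}.

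The only subtle point is that $a\leq s$ is tight when $a= s$, and this is precisely what closes the positive branch of the dichotomy. Beyond that, I anticipate no real obstacle: the hypothesis ``$2\leq a\leq s$ or $k= 0$ or $k$ is odd'' will play no role in this upper-bound argument, since its purpose is only to exclude the improved bound $\sum_{i}\ell_{i}+ a$ established by Theorem \ref{theo2.6}. The one structural difference from that earlier proof is that here $\sum_{i}\delta_{i}= a\geq 1$ instead of $0$, and this strictly positive surplus is exactly what removes the need for the parity/regularity contradiction used there.
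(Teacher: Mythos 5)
Your proof is correct, but it takes a genuinely different and more direct route than the paper. The paper establishes the upper bound $r^{s,t}\le \sum_{i=1}^{t}\ell_i+a+1$ by induction on $t$: the base case $t=s+1$ is handled by double-counting $\sum_{B\in {[s+1]\setminus\{1\}\choose s-1}}\sum_{i\in\{1\}\cup B}d_i$ against $\sum_B (m_{\{1\}\cup B}-1)$ via Corollary \ref{coro2.4}, and the inductive step finds a color $i_0$ with $d_{i_0}(u)\le \ell_{i_0}$ and then appeals to the value of the $(t-1)$-color Ramsey number (Theorem \ref{theo2.6} or the induction hypothesis). You instead isolate a single arithmetic lemma at one vertex: writing $\delta_i=d_i(u)-\ell_i$ with $\sum_i\delta_i=a\ge 1$, the $s$ largest excesses sum to at least $a$ (either their minimum is $\le 0$, so discarding the rest cannot decrease the sum below $a$, or all are $\ge 1$ and the sum is $\ge s\ge a$), and Corollary \ref{coro2.4} then yields $\sum_{i\in A}d_i(u)\ge m_A$ for the corresponding $A\in{[t]\choose s}$. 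This is shorter, avoids the induction entirely, and does not lean on Theorem \ref{theo2.6} (whose proof needs a global parity/regularity argument); in fact your lemma is precisely the vertex-level statement that the paper's inductive step implicitly requires when it converts ``$\sum_{i=1}^{t-1}d_i\ge \sum_{i=1}^{t-1}\ell_i+a$'' into a monochromatic-set star via the $(t-1)$-color Ramsey value, so your argument also makes that step fully transparent. Your closing observation is accurate as well: the hypothesis ``$2\le a\le s$ or $k=0$ or $k$ odd'' is used only so that Theorem \ref{theo2.5} supplies the matching lower bound $\sum_{i=1}^{t}\ell_i+a+1$; the upper bound you prove holds unconditionally.
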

\begin{proof}
    We just need to show that $r^{s, t}(K_{1, m_{A_{1}}}, K_{1, m_{A_{2}}}, \dots, K_{1, m_{A_{c}}})\leq \sum_{i= 1}^{t} \ell_{i}+ a+ 1= N$ by Theorem \ref{theo2.5}. Color $K_{N}$ arbitrarily by $t$ colors and denote the resulting graph by $G$. Let $u\in V(G)$ be a vertex. Moreover, let $d_{i}= d_{i}(u)$. Note that $\sum_{i= 1}^{t} d_{i}= d(u)= N- 1= \sum_{i=1}^{t}\ell_{i}+ a$. In the following, we will show that $G$ contains a copy of $K_{1, m_{A}}$ whose edges are colored by colors in the color set $A$ for some $A\in {[t]\choose s}$ by using induction in $t$.

    Firstly, let $t= s+ 1$. By the pigeonhole principle, there is $i_{0}\in [s+ 1]$ such that $d_{i_{0}}\geq \ell_{i_{0}}+ 1$ since $a\in [s]$. Without loss of generality, we may assume that $d_{1}= \ell_{1}+ 1+ c$, where $c\geq 0$. Moreover, we may assume that $\sum_{B\in {[s+ 1]\backslash \{1\}\choose s- 1}}\sum_{i\in \{1\}\cup B} d_{i}\leq \sum_{B\in {[s+ 1]\backslash \{1\}\choose s- 1}} (m_{\{1\}\cup B}- 1)$. Otherwise, there is a copy of $K_{1, m_{\{1\}\cup B}}$ whose edges are colored by colors in the color set $\{1\}\cup B$ for some $B\in {[s+ 1]\backslash \{1\}\choose s- 1}$, and we are done. Consequently,
    $$\begin{aligned}
        & \sum_{B\in {[s+ 1]\backslash \{1\}\choose s- 1}} (m_{\{1\}\cup B}- 1)\geq \sum_{B\in {[s+ 1]\backslash \{1\}\choose s- 1}}\sum_{i\in \{1\}\cup B} d_{i}\\
        = & sd_{1}+ (s- 1)\sum_{i= 2}^{s+ 1} d_{i}= d_{1}+ (s- 1)d(u)\\
        = & \ell_{1}+ 1+ c+ (s- 1)\left(\sum_{i= 1}^{s+ 1} \ell_{i}+ a\right)\\
        = & \sum_{B\in {[s+ 1]\backslash \{1\}\choose s- 1}} (m_{\{1\}\cup B}- a)+ (s- 1)a+ 1+ c\\
        = & \sum_{B\in {[s+ 1]\backslash \{1\}\choose s- 1}} (m_{\{1\}\cup B}- 1)- a+ s+ 1+ c\\
        \geq & \sum_{B\in {[s+ 1]\backslash \{1\}\choose s- 1}} (m_{\{1\}\cup B}- 1)+ 1,
    \end{aligned}$$
    where the third to the last inequality comes from Corollary \ref{coro2.4} and the last inequality comes from $a\in [s]$ and $c\geq 0$. This is a contradiction. Consequently, the assertion holds for $t= s+ 1$.
    
    Assume that $t\geq s+ 2$ and the assertion holds for $t- 1$. Let $k'$ be the number of odd numbers in $\ell_{1}, \ell_{2}, \dots, \ell_{t- 1}$ and let $c'= {t- 1\choose s}$. Note that $r^{s, t- 1}(K_{1, m_{A_{1}}}, K_{1, m_{A_{2}}}, \dots, K_{1, m_{A_{c'}}})\leq \sum_{i= 1}^{t- 1} \ell_{i}+ a+ 1$ by Theorem \ref{theo2.6} or the induction hypothesis. Consequently, if $\sum_{i= 1}^{t- 1} d_{i}\geq \sum_{i= 1}^{t- 1} \ell_{i}+ a$, then there is a copy of $K_{1, m_{A'}}$ whose edges are colored by colors in the color set $A'$ for some $A'\in {[t- 1]\choose s}$, and we are done. Thus, to finish the proof, we only need to show that $\sum_{i= 1}^{t- 1} d_{i}\geq \sum_{i= 1}^{t- 1} \ell_{i}+ a$.
    
    By the pigeonhole principle, there is $i_{0}\in [t]$ such that $d_{i_{0}}\leq \ell_{i_{0}}$ since $\sum_{i= 1}^{t} d_{i}= d(u)= \sum_{i=1}^{t}\ell_{i}+ a, a\in [s]$ and $t\geq s+ 2$. Without loss of generality, we may assume that $i_{0}= t$ (if $i_{0}\neq t$, then we can relabel the colors such that $i_{0}= t$). Note that 
    $$\sum_{i= 1}^{t- 1} d_{i}= N- 1- d_{t}\geq N- 1- \ell_{t}= \sum_{i= 1}^{t- 1} \ell_{i}+ a.$$
    Thus, we finish the proof.\q
  \end{proof}
  
  Combining Theorem \ref{theo2.6} and Theorem \ref{theo2.7}, we obtain Theorem \ref{theo1.5}.
  
\section{Proof of Theorem \ref{theo1.6}}
  
  Note that Theorem \ref{theo1.6} holds for $s= 1$ by Theorem \ref{theo1.2} and holds for $s= t- 1$ by Theorem \ref{theo1.4}. Thus, we may assume that $2\leq s\leq t- 2$ in this section. Firstly, we need a stronger decomposition of a complete graph from Bollab{\'a}s.
  \begin{thm}[Bollab{\'a}s \cite{B}]\label{theo4.1}
    $K_{2n+ 1}$ can be decomposed into $n$ edge-disjoint Hamiltonian cycles.
  \end{thm}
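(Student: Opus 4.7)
The plan is to establish the decomposition via the classical Walecki construction: exhibit one carefully chosen Hamiltonian cycle in $K_{2n+1}$ and obtain the remaining $n-1$ by rotation. Label the vertex set as $\mathbb{Z}_{2n} \cup \{\infty\}$ and take as base cycle the ``zigzag''
$$H_0 = \infty,\, 0,\, 1,\, -1,\, 2,\, -2,\, \ldots,\, n-1,\, -(n-1),\, n,\, \infty,$$
with the finite vertices read modulo $2n$. For each $i \in \{0, 1, \ldots, n-1\}$, let $H_i$ be obtained from $H_0$ by sending each finite vertex $v$ to $v + i \pmod{2n}$ and fixing $\infty$; each $H_i$ is plainly a Hamiltonian cycle of $K_{2n+1}$.

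Because $H_0, \ldots, H_{n-1}$ together contain $n(2n+1) = |E(K_{2n+1})|$ edges with multiplicity, the proof reduces to showing that no edge of $K_{2n+1}$ is covered twice. I would handle the two edge types separately. The two infinity edges $\{\infty, 0\}$ and $\{\infty, n\}$ of $H_0$ produce, under the $n$ rotations, exactly $\{\infty, i\}$ and $\{\infty, n+i\}$ for $i = 0, \ldots, n-1$, which are the $2n$ edges at $\infty$, pairwise distinct. For a finite edge $\{a,b\}$ assign its length $\ell = \min(|a-b|, 2n - |a-b|) \in \{1, \ldots, n\}$; since rotations preserve length, it suffices to show that the finite edges of $H_0$ hit each length the correct number of times and that their rotation-orbits partition the respective length classes.

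The decisive computation, and the step I expect to demand the most care, is the length distribution of the $2n - 1$ finite edges of $H_0$. Walking along the zigzag shows that the $k$th finite edge has integer difference exactly $k$ for $k = 1, \ldots, 2n-1$, giving the length sequence $1, 2, \ldots, n-1, n, n-1, \ldots, 2, 1$: each $\ell < n$ appears on two edges and length $n$ on exactly one. The single length-$n$ edge $\{0, n\}$ has a rotation-orbit of size $n$ sweeping all $n$ length-$n$ edges of $K_{\mathbb{Z}_{2n}}$. For $\ell < n$, the two length-$\ell$ edges of $H_0$ (the $\ell$th and $(2n-\ell)$th finite edges along the cycle) lie in different orbits of the order-$n$ cyclic shift subgroup; each such orbit has size $n$, so together they partition the $2n$ length-$\ell$ edges of $K_{\mathbb{Z}_{2n}}$. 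Combined with the accounting at $\infty$, every edge of $K_{2n+1}$ is covered exactly once, completing the decomposition.
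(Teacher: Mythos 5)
Your construction is essentially correct, but it is worth noting that the paper does not prove this statement at all: Theorem~\ref{theo4.1} is Walecki's classical Hamiltonian decomposition theorem, and the authors simply quote it from Bollob\'as's book \cite{B}. So where the paper buys the result with a citation, you supply a self-contained proof via the standard zigzag-and-rotate (Walecki) construction, and your counting framework is the right one: $n(2n+1)=|E(K_{2n+1})|$ edges in total, so it suffices to rule out double coverage, which you do separately for the edges at $\infty$ and for each length class of finite edges. Two details in your write-up should be tightened. First, the unique length-$n$ edge of $H_0$ is the middle (the $n$th) finite edge of the zigzag, namely $\{n/2,-n/2\}$ or $\{-(n-1)/2,(n+1)/2\}$ according to parity, not $\{0,n\}$; this is harmless, since the $n$ translates of any single length-$n$ edge by $0,1,\dots,n-1$ already sweep out all $n$ edges of length $n$. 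Second, the shifts by $0,1,\dots,n-1$ do not form a subgroup of $\mathbb{Z}_{2n}$, so the phrase ``different orbits of the order-$n$ cyclic shift subgroup'' does not literally justify disjointness for $\ell<n$; the correct (and easily checked) reason is that the two length-$\ell$ edges of $H_0$, the $\ell$th and $(2n-\ell)$th finite edges, are translates of one another by exactly $n$: writing each length-$\ell$ edge uniquely as $\{x,x+\ell\}$, their lower endpoints are $x$ and $x+n$, so the translate families $\{x+i,x+\ell+i\}$ and $\{x+n+i,x+\ell+n+i\}$, $0\le i\le n-1$, are disjoint and together exhaust all $2n$ edges of length $\ell$. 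With that computation inserted, your argument is a complete and more elementary proof than an appeal to the literature, at the cost of about a page of verification that the paper avoids.
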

  
  Let $m, k$ and $t$ be positive integers. Let $s$ and $a'$ be integers such that $s\in [t- 1]$ and $0\leq a'\leq s- 1$. Let $b$ be an integer as follows.
  $$b= \begin{cases}
    (2k- 1)t+ s, & m= 2ks \text{ and } s\neq 1,\\
    2kt+ a', & m= 2ks+ a' \text{ and } 1\leq a'\leq s- 1,\\
    2kt+ s, & m= (2k+ 1)s,\\
    (2k+ 1)t+ 1, & m= (2k+ 1)s+ 1 \text{ and } t \text{ is odd},\\
    (2k+ 1)t+ a', & m= (2k+ 1)s+ a' \text{ and } 2\leq a'\leq s- 1.
  \end{cases}$$
  
  \begin{lem}\label{lemma4.2}
    Let $G$ be a graph, and let $u\in V(G)$ be a vertex such that $d(u)\geq b$. Then every $t$-coloring of $E(G)$ yields a copy of $K_{1, m}$ with center $u$, whose edges are colored by colors in the color set $A_{i}$ for some $A_{i}\in {[t]\choose s}$.
  \end{lem}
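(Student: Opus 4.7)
The plan is to argue by contradiction and reduce the statement to a purely local degree count at $u$. For each color $i \in [t]$, write $d_i = d_i(u)$ for the number of edges of $G$ at $u$ in color $i$, and reorder so that $d_{(1)} \ge d_{(2)} \ge \cdots \ge d_{(t)}$. A copy of $K_{1,m}$ centered at $u$ whose edges lie in some color set $A \in \binom{[t]}{s}$ exists if and only if there is such an $A$ with $\sum_{i \in A} d_i \ge m$, and the best available $A$ is the set of $s$ colors of largest color-degree at $u$, which achieves $S := d_{(1)} + \cdots + d_{(s)}$. It therefore suffices to show that $d(u) \ge b$ forces $S \ge m$.

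Suppose instead $S \le m - 1$. Then $s\, d_{(s)} \le S \le m - 1$, so $d_{(s)} \le \lfloor (m - 1)/s \rfloor$, and since $d_{(j)} \le d_{(s)}$ for every $j > s$ this gives
\[
d(u) \;=\; S + \sum_{j = s + 1}^{t} d_{(j)} \;\le\; (m - 1) + (t - s)\left\lfloor \frac{m - 1}{s}\right\rfloor.
\]
The proof then finishes by verifying, in each of the five lines in the definition of $b$, that this upper bound equals exactly $b - 1$, contradicting the hypothesis $d(u) \ge b$.

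The verification is a routine case check driven by the value of $\lfloor (m-1)/s \rfloor$: this floor equals $2k - 1$ when $m = 2ks$ (with $s \ge 2$), equals $2k$ when $m = 2ks + a$ with $1 \le a \le s - 1$ or when $m = (2k+1)s$, and equals $2k + 1$ when $m = (2k+1)s + 1$ (with $t$ odd) or $m = (2k+1)s + a$ with $2 \le a \le s - 1$. Plugging each value into $(m - 1) + (t - s)\lfloor (m - 1)/s \rfloor$ collapses the expression to $b - 1$ in every line. The main obstacle is thus not conceptual but purely arithmetic bookkeeping: one must verify case by case that the chosen $b$---which, as can be read off against Theorem \ref{theo1.5}, is exactly $r^{s,t}(K_{1,m}) - 1$ in the cases under consideration---is precisely the sharp threshold produced by the averaging inequality above.
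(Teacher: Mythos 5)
Your proof is correct and takes essentially the same route as the paper: both assume the $s$ largest color degrees at $u$ sum to at most $m-1$, use pigeonhole to get $d_{(s)}\leq \lfloor (m-1)/s\rfloor$, and derive a contradiction with $d(u)\geq b$ by the same case-by-case arithmetic (the paper phrases the contradiction as $d_{(s)}<d_{(s+1)}$, you as $d(u)\leq (m-1)+(t-s)\lfloor (m-1)/s\rfloor=b-1$, which amount to the same computation). Your stated floor values and the resulting collapse to $b-1$ are correct in all five cases.
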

  \begin{proof}
    Let $d_{i}= d_{i}(u)$. Without loss of generality, we may assume that $d_{1}\geq d_{2}\geq \dots \geq d_{t}$. If $\sum_{i= 1}^{s} d_{i}\geq m$, then we are done. Thus, we may assume that $\sum_{i= 1}^{s} d_{i}\leq m- 1$. Furthermore, $d_{s}\leq \lfloor (m- 1)/s\rfloor$ by the pigeonhole principle. Note that $\sum_{i= s+ 1}^{t} d_{i}\geq b- \sum_{i= 1}^{s}d_{i}\geq b- m+ 1$. Thus, $d_{s+ 1}\geq \lceil (b- m+ 1)/(t- s)\rceil$ by the pigeonhole principle again. One can directly verify that 
    $$d_{s}\leq \lfloor (m- 1)/s\rfloor< \lceil (b- m+ 1)/(t- s)\rceil\leq d_{s+ 1}.$$
    It is a contradiction to our assumption.\q
  \end{proof}

  \begin{thm}\label{theo4.3}
    If $m\neq (2k+ 1)s+ 1$, or $t$ is odd, then $r_{*}^{s, t}(K_{1, m})= 1$.
  \end{thm}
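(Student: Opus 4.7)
The plan is to establish $r_{*}^{s, t}(K_{1, m}) \le 1$ by applying Lemma \ref{lemma4.2} to a single specific vertex of the star-critical graph $K_{N} - K_{1, N- 2}$, where $N = r^{s, t}(K_{1, m})$; the matching lower bound $\ge 1$ will be immediate from the definition of $N$.

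The structural fact that drives everything is that $K_{N} - K_{1, N- 2}$ still contains a vertex of full degree $N - 1$. Indeed, if $v$ denotes the center of the removed star $K_{1, N- 2}$ and $w$ denotes the unique vertex of $V(K_{N}) \setminus \{v\}$ that is not a leaf of that star, then $w$ retains all of its $N - 1$ original edges. I would fix this vertex $w$ and try to apply Lemma \ref{lemma4.2} with $u = w$.

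The main (and essentially only) work is a case-by-case verification that $b = N - 1$ under the hypothesis of Theorem \ref{theo4.3}. Matching the explicit formulas for $r^{s, t}(K_{1, m})$ from the corollary following Theorem \ref{theo1.5} against the piecewise definition of $b$ preceding Lemma \ref{lemma4.2}, one checks in each of the five listed cases (namely $m = 2ks$ with $s \ne 1$; $m = 2ks + a$ with $1 \le a \le s- 1$; $m = (2k+ 1)s$; $m = (2k+ 1)s + 1$ with $t$ odd; and $m = (2k+ 1)s + a$ with $2 \le a \le s- 1$) that indeed $N - 1 = b$. These are precisely the cases permitted by the hypothesis ``$m \ne (2k+ 1)s + 1$ or $t$ is odd''; the sole excluded case $m = (2k+ 1)s + 1$ with $t$ even is exactly the one where $b$ is left undefined, and this case is handled separately in the statement of Theorem \ref{theo1.6}.

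Once $d(w) = N - 1 = b$ is in place, Lemma \ref{lemma4.2} applied at $u = w$ produces, under any $t$-coloring of $E(K_{N} - K_{1, N- 2})$, a copy of $K_{1, m}$ in some color set $A_{i}$ with center $w$; this gives $r_{*}^{s, t}(K_{1, m}) \le 1$. For the lower bound, $k = 0$ corresponds to $K_{N} - K_{1, N- 1}$, which is a $K_{N- 1}$ together with an isolated vertex; since $N - 1 < r^{s, t}(K_{1, m})$, a $t$-coloring of that $K_{N- 1}$ avoiding all the required colored stars exists by minimality of $N$, and extending it arbitrarily across the isolated vertex furnishes the counterexample forcing $k \ge 1$. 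Thus the only real obstacle is the arithmetic bookkeeping in the case-check matching $N - 1$ to $b$; no ingredient beyond Lemma \ref{lemma4.2} is required.
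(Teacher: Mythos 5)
Your proposal is correct and is essentially the paper's own argument: the paper also observes that in every case covered by the hypothesis $N=r^{s,t}(K_{1,m})=b+1$, so the vertex of full degree $N-1=b$ in $K_{N}-K_{1,N-2}$ together with Lemma \ref{lemma4.2} gives $r_{*}^{s,t}(K_{1,m})\leq 1$, while the lower bound is immediate from the definition. Your write-up merely makes explicit the case-by-case check $N-1=b$ and the identification of the untouched full-degree vertex, which the paper leaves implicit.
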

  \begin{proof}
    Note that $r^{s, t}(K_{1, m})= b+ 1$ by Corollary \ref{coro}. Thus, $r_{*}^{s, t}(K_{1, m})\leq 1$ by Lemma \ref{lemma4.2}. Consequently, $r_{*}^{s, t}(K_{1, m})= 1$ since $r_{*}^{s, t}(K_{1, m})\geq 1$ by the definition of $r^{s, t}(K_{1, m})$.\q
  \end{proof}
  
  \begin{thm}\label{theo4.4}
    If $m= (2k+ 1)s+ 1$ and $t$ is even, then
    $$r_{*}^{s, t}(K_{1, m})= \begin{cases}
      2kt+ t/2+ 1, & 2\leq s\leq t/2,\\
      2kt+ s+ 1, & t/2+ 1\leq s\leq t- 2.
    \end{cases}$$
  \end{thm}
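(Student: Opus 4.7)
Set $N:=r^{s,t}(K_{1,m})=(2k+1)t+1$; this is the value given by the corollary to Theorem~\ref{theo1.5} for $m=(2k+1)s+1$ with $t$ even. In the graph $K_N-K_{1,N-1-k_0}$, let $u$ be the vertex of reduced degree, $S$ its neighborhood inside $V(K_{N-1})$, and $T:=V(K_{N-1})\setminus S$, so that $|S|=k_0$ and $|T|=N-1-k_0$.

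I would first prove a structural observation: in any $t$-coloring of $K_{N-1}$ avoiding $K_{1,m}$ in every $s$-color set, every vertex $v$ has color-degree multiset $\{2k+1,\dots,2k+1,2k\}$. The total color-degree at $v$ is $(2k+1)t-1$ and the top-$s$ sum is at most $(2k+1)s$; a color degree $\geq 2k+2$ would force the bottom $t-s$ values to sum to at most $2k(t-s)$, making the overall total at most $2kt+s$, which is strictly less than $(2k+1)t-1$ whenever $s\leq t-2$. Write $i^*(v)$ for the unique deficient color and $n_c:=|\{v:i^*(v)=c\}|$; the color-$c$ degree sum is $(2k+1)^2 t-n_c$, forcing $n_c$ even (since $t$ is even), and $\sum_c n_c=N-1$. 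Restriction of any coloring of $K_N-K_{1,N-1-k_0}$ to $K_{N-1}$ inherits the no-$K_{1,m}$ property, so this observation applies; moreover, for each $v\in S$ the color $c(uv)$ must equal $i^*(v)$, since otherwise $d_{c(uv)}(v)$ would jump to $2k+2$ and yield a top-$s$ sum at $v$ of at least $(2k+1)s+1$. Thus $d_c(u)=|S_c|:=|\{v\in S:i^*(v)=c\}|\leq n_c$ for each $c$.

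For the lower bound, I would exhibit a valid coloring for $k_0=r_*-1$. In Case (i) ($2\leq s\leq t/2$), take $|S_c|=2k+1$ on $t/2$ colors and $|S_c|=2k$ on the remaining $t/2$, yielding $k_0=2kt+t/2$ with top-$s$ sum at $u$ equal to $(2k+1)s=m-1$. In Case (ii) ($t/2+1\leq s\leq t-2$), take $|S_c|=2k+2$ on $\lfloor s/2\rfloor$ colors, one more at $|S_c|=2k+1$ when $s$ is odd, and $|S_c|=2k$ on the rest, yielding $k_0=2kt+s$ with top-$s$ sum still $m-1$. In each case, set $n_c$ to the least even integer $\geq|S_c|$ and distribute the leftover slack (namely $|T|$ minus the number of odd-valued $|S_c|$'s, which is nonnegative and of even parity in every subcase) in increments of two across the colors. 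The underlying coloring of $K_{N-1}$ is constructed from the $1$-factor decomposition in Theorem~\ref{theo2.1} by swapping single edges to install the prescribed deficient color at each vertex.

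For the upper bound, I would assume a valid coloring with $k_0$ equal to the claimed $r_*$, apply the observation, and bound $\sum|S_c|$ from above. Let $q$ be the number of colors with $|S_c|\geq 2k+2$ and $a$ the number with $|S_c|=2k+1$; all other colors satisfy $|S_c|\leq 2k$. If $q=0$, the top-$s$ bound permits any $a$, but the parity bound $\sum n_c\geq 2kt+2a\leq N-1$ forces $a\leq t/2$, yielding $\sum|S_c|\leq 2kt+t/2$. If $q\geq 1$, the top-$s$ bound forces $2q+a\leq s$, yielding $\sum|S_c|\leq 2kt+2q+a\leq 2kt+s$. Taking the maximum across the two cases gives $2kt+t/2$ when $s\leq t/2$ and $2kt+s$ when $s>t/2$; either way this is strictly less than $k_0$, the desired contradiction. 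The principal obstacle is the Case (ii) lower bound: the naive distribution with $s$ odd-valued $|S_c|$'s is blocked by the parity constraint $\sum n_c\leq N-1$, forcing the two-unit jumps $2k\to 2k+2$ and a corresponding careful decomposition of $K_{N-1}$ to realize the prescribed $n_c$ parameters.
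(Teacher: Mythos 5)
The genuine gap is in your lower-bound construction. Everything there reduces to the claim that $K_{N-1}=K_{(2k+1)t}$ admits a $t$-coloring in which every vertex has color degree $2k+1$ in all colors except one ``deficient'' color, where it has degree $2k$, with a prescribed even number $n_c$ of vertices deficient in color $c$ (after you distribute the slack, in every case the needed pattern is $n_c=2k+2$ for $t/2$ colors and $n_c=2k$ for the other $t/2$). You assert this follows from the $1$-factorization of Theorem \ref{theo2.1} ``by swapping single edges'', but that is exactly where the work lies and it is not obviously doable: starting from the natural base assignment ($2k+1$ factors to $t-1$ colors, $2k$ to the last, so all vertices are deficient in the last color), each swap recolors an edge $vw$ from color $c$ to the last color and transfers the deficiency of \emph{both} endpoints to $c$; hence the swapped edges must form a matching that covers all but roughly $2k$ vertices and uses exactly $n_c/2$ edges from each color class $c$. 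The existence of such a near-spanning matching with a prescribed color profile needs a proof (a greedy choice inside single $1$-factors breaks down once most vertices are used), and none is given. The paper avoids this problem entirely: it builds the extremal coloring directly on $K_N$, $N=(2k+1)t+1$ odd, from the Hamiltonian-cycle decomposition of Theorem \ref{theo4.1}, deleting one edge at the special vertex from each of $t/2$ cycles and splitting the resulting Hamiltonian paths into two near-perfect matchings; this automatically realizes the required degree pattern with no selection argument.

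Your upper-bound skeleton (restricting to $K_{N-1}$ forces the color-degree multiset $\{2k+1,\dots,2k+1,2k\}$ at every vertex when $s\le t-2$; each edge $uv$ must carry $v$'s deficient color, so $d_c(u)=|S_c|\le n_c$ with $n_c$ even and $\sum_c n_c=(2k+1)t$) is sound and would yield a correct, though longer, proof than the paper's, which simply locates one vertex $w$ with $s$-th largest color degree at most $2k$ and bounds its total degree by $2kt+s<b$. However, two of your stated inequalities are unjustified. In the $q=0$ case, $\sum_c n_c\ge 2kt+2a$ presumes $n_c\ge 2k$ for every color, which nothing guarantees (a color class can be $(2k+1)$-regular on $K_{N-1}$, giving $n_c=0$); the correct route is $\sum_c|S_c|\le(2k+1)a+\bigl(\sum_c n_c-(2k+2)a\bigr)=(2k+1)t-a$ together with $\sum_c|S_c|\le 2kt+a$, whose minimum is at most $2kt+t/2$. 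In the $q\ge 1$ case, neither $2q+a\le s$ nor $\sum_c|S_c|\le 2kt+2q+a$ holds as stated, since a color may have $|S_c|>2k+2$; what is true and suffices is that at most $s$ colors can have $d_c(u)\ge 2k+1$, so the bottom $t-s$ color degrees at $u$ are at most $2k$ and $\sum_c|S_c|\le(2k+1)s+2k(t-s)=2kt+s$. These slips are repairable, but as written the lower-bound realization is the essential missing piece.
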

  \begin{proof}
    Note that $r^{s, t}(K_{1, m})= (2k+ 1)t+ 1= N$ by Corollary \ref{coro}. Let $x= 2kt+ t/2+ 1$ for $2\leq s\leq t/2$ and $x= 2kt+ s+ 1$ for $t/2+ 1\leq s\leq t- 2$.
    
    Firstly, we will show that $r_{*}^{s, t}(K_{1, m})\geq x$, that is, there is a $t$-coloring of the edges of $K_{N}- E(K_{1, N- x})$ without copy of $K_{1, m}$ whose edges are colored by colors in the color set $A_{i}$ for each $A_{i}\in {[t]\choose s}$. Note that $(2k+ 1)t+ 1$ is odd since $t$ is even. Thus, $K_{(2k+ 1)t+ 1}$ can be decomposed into $(2k+ 1)t/2$ edge-disjoint Hamiltonian cycles by Theorem \ref{theo4.1}. Denote them by $C_{1}, C_{2}, \dots, C_{(2k+ 1)t/2}$. Let $v\in V(K_{N})$ be a vertex. For each $i\in [t/2]$, let $u_{i}v\in E(C_{i})$ be an edge and let $P_{i}$ be the graph obtained from $C_{i}$ by removing $u_{i}v$. Note that $E(P_{i})$ can be decomposed into two edge-disjoint matchings $M_{2(i- 1)+ 1}$ and $M_{2i}$. Color $C_{k(i- 1)+ 1+ t/2}, C_{k(i- 1)+ 2+ t/2}, \dots, C_{ki+ t/2}$ and $M_{i}$ in color $i$ for each $i\in [t]$. Denote the resulting graph by $H'$. Moreover, color $u_{i}v$ in color $i$ for each $i\in [s- t/2]$ if $t/2+ 1\leq s\leq t- 2$. Denote the resulting graph by $H''$. Note that $H'$ contains no copy of $K_{1, m}$ whose edges are colored by colors in the color set $A_{i}$ for each $A_{i}\in {[t]\choose s}$ since $(2k+ 1)s= m- 1< m$. Moreover, if $t/2+ 1\leq s\leq t- 2$, then $H''$ contains no copy of $K_{1, m}$ whose edges are colored by colors in the color set $A_{i}$ for each $A_{i}\in {[t]\choose s}$ since $(2k+ 1)s= m- 1< m$ and $(2k+ 1)t/2+ 2k(s- t/2)+ s- t/2= (2k+ 1)s= m- 1< m$. Consequently, $r_{*}^{s, t}(K_{1, m})\geq x$.
    
    In the following, we will show that $r_{*}^{s, t}(K_{1, m})\leq x$. Let $v\not\in V(K_{N- 1})$ be a vertex. Add $x$ edges between $\{v\}$ and $V(K_{N- 1})$. Color the edges of the resulting graph arbitrarily by $t$ colors. Denote the resulting graph by $G$. Let $u\in V(G)$ be a vertex and let $d_{i}= d_{i}(u)$. Without loss of generality, we may assume that $d_{1}\geq d_{2}\geq \dots \geq d_{t}$. If $\sum_{i= 1}^{s} d_{i}\geq m$, then we are done. Thus, we may assume that $\sum_{i= 1}^{s} d_{i}\leq m- 1= (2k+ 1)s$. Moreover, $d_{s}\leq 2k+ 1$ by the pigeonhole principle. If $d_{s}\leq 2k$, then $\sum_{i= s+ 1}^{t} d_{i}\leq \sum_{i= s+ 1}^{t} d_{s}\leq 2k(t- s)$. Note that $\sum_{i= 1}^{t} d_{i}= x$, $\sum_{i= 1}^{t} d_{i}= (2k+ 1)t- 1$ or $\sum_{i= 1}^{t} d_{i}= (2k+ 1)t$. Consequently,
    $$x\leq \sum_{i= 1}^{t} d_{i}= \sum_{i= 1}^{s} d_{i}+ \sum_{i= s+ 1}^{t} d_{i}\leq m- 1+ 2k(t- s)= 2kt+ s\leq x- 1.$$
    This is a contradiction. Thus, we may assume that $d_{s}= 2k+ 1$. Moreover, $d_{1}= d_{2}= \cdots= d_{s}= 2k+ 1$ since $\sum_{i= 1}^{s} d_{i}\leq (2k+ 1)s$ and $d_{1}\geq d_{2}\geq \cdots\geq d_{s}$. Consequently, $d_{i}(u)\leq 2k+ 1$ for each vertex $u\in V(G)$ and each $i\in [t]$. Note that for each $i\in [t]$, $2e(G_{i})\leq (2k+ 1)N- 1$ since $N$ is odd (from Corollary 9 in \cite{L}). Consequently,
    $$\begin{aligned}
      & (2k+ 1)^{2}t^{2}+ 2kt= [(2k+ 1)N- 1]t\geq \sum_{i= 1}^{t} 2e(H_{i})\\
      = & 2e(H)= 2{N- 1\choose 2}+ 2x= (2k+ 1)^{2}t^{2}- (2k+ 1)t+ 2x\\
      \geq & (2k+ 1)^{2}t^{2}+ 2kt+ 2.
    \end{aligned}$$
    This is a contradiction.\q
  \end{proof}

  Combining Theorem \ref{theo4.3} and Theorem \ref{theo4.4}, we obtain Theorem \ref{theo1.6}.
  
\section*{acknowledgements}
  We are thankful to the reviewers for reading the manuscript very carefully and giving valuable comments on how to improve the manuscript.

\section*{Conflict of interest}
The authors declare that they have no conflict of interest.

\section*{Data availability}
No data was used for the research described in the article.

\end{document}